\documentclass[12pt,reqno]{amsart}
\usepackage{amssymb,amsmath,amsthm}
\usepackage{color}
\usepackage{pdflscape}
\usepackage{longtable}
\usepackage{enumerate}
\usepackage{tikz}
\usepackage{float}
\usepackage[colorlinks=true,
linkcolor=blue,
urlcolor=blue,
citecolor=blue]{hyperref}
\usepackage{amsaddr}

\usepackage[margin=2cm]{geometry}
\usepackage{tkz-graph}
\usepackage{subcaption}
\usepackage{alltt,xcolor}

\setcounter{MaxMatrixCols}{20}

\newtheorem{thm}{Theorem}[section]
\newtheorem{lem}[thm]{Lemma}
\newtheorem{prop}[thm]{Proposition}
\newtheorem{hyp}{Hypothesis}
\theoremstyle{definition}
\newtheorem{defn}[thm]{Definition}

\newtheorem{exa}[thm]{Example}

\newtheorem{claim}{Claim}

\newcommand{\itbf}[1]{{\bf{{\emph{{#1}}}}}}
\newcommand{\Aut}[1]{\operatorname{Aut}(#1)}
\newcommand{\End}[2]{\operatorname{End}_{#1}\left(#2\right)}
\newcommand{\sym}[1]{\operatorname{Sym}(#1)}
\newcommand{\alt}[1]{\operatorname{Alt}(#1)}

\newcommand{\agl}[2]{\operatorname{AGL}_#1(#2)}

\newcommand{\triv}[1]{\operatorname{\mathbf{1}}_{#1}}

\begin{document}
	
	\title[]{On the Terwilliger algebras of quasi-thin Schurian association schemes}
	
	\author[R.~Maleki]{Roghayeh Maleki}
	\address{Department of Mathematics and Statistics, University of Regina,\\ 3737 Wascana Parkway, Regina, SK S4S 0A2, Canada}\email{roghayeh.maleki@uregina.ca}
	
	\author[A.~S.~Razafimahatratra]{Andriaherimanana Sarobidy Razafimahatratra$^{*, a}$}\thanks{$^*$ The author gratefully acknowledges that this research was supported by the Fields Institute for Research in Mathematical Sciences.\\ \textsuperscript{a} Corresponding author} 
	\address{School of Mathematics and Statistics, Carleton University,\\ 1125 Colonel by drive, Ottawa, ON K1S 5B6, Canada}\email{sarobidy@phystech.edu}
	
	\date{\today}
	
	\keywords{Terwilliger algebras, Schurian association schemes, permutation groups}
	\subjclass[2020]{05E30, 05E18, 20B05}
	
	\begin{abstract} 
		An association scheme is triply transitive if its automorphism group is transitive, and the centralizer algebra of a point stabilizer of its automorphism group coincides with the Terwilliger algebra and its subspace $T_0$. 
		In this paper, we give necessary and sufficient conditions for  a quasi-thin association scheme to be triply transitive. As a by-product, we give new infinite families of triply-transitive association schemes.
	\end{abstract}
	
	\maketitle
	
	\section{Introduction}
	
	Associations schemes are highly-regular combinatorial objects that play a fundamental role in algebraic combinatorics. In 1992, Paul Terwilliger introduced Terwilliger algebras in \cite{terwilliger1992subconstituent,terwilliger1993subconstituent2,terwilliger1993subconstituent3} as a means of studying $P$- and $Q$- polynomial association schemes. The (algebraic) structure of Terwilliger algebras heavily depends on these underlying commutative association schemes, and is generally highly complex. 
	Terwilliger algebras can also be defined, more generally, for non-P or non-Q polynomial commutative association schemes, as well as for non-commutative association schemes as in \cite{BannaiMunemasa95,chen2025structure,muzychuk2016terwilliger,song2017combinatorial}. Association schemes with small valencies are interesting from the viewpoint of Terwilliger algebras since they are often non-commutative, and their Terwilliger algebras are naturally restricted by the valencies. This paper is concerned with quasi-thin association schemes whose Terwilliger algebras are combinatorially and algebraically as small as possible.

	Let $\Omega$ be a finite set and $\mathcal{R} = \{R_0,R_1,\ldots,R_d\}$, with $R_0 = \{ (\omega,\omega): \omega\in \Omega \}$, be a set of relations on $\Omega$. The pair $\mathfrak{X} = (\Omega,\mathcal{R})$ is an \itbf{association scheme} if the following properties hold.
	\begin{enumerate}[({AS}1)]
		\item $\mathcal{R}$ is a partition of $\Omega \times \Omega$.\label{partition}
		\item For $1\leq i\leq d$, the relation $R_{i}^* = \{ (v,u): (u,v) \in R_i \}$ belongs to $\mathcal{R}$. In particular, $*$ can be viewed as a permutation in $\sym{d}$ of order dividing $2$.\label{transposition}
		\item For $0\leq i,j,k\leq d$, there exists a non-negative integer $p_{ij}^k$ such that for $(u,v) \in R_k$ we have\label{intersection-number}
		\begin{align*}
			\left|\left\{ w\in \Omega: (u,w) \in R_i\mbox{ and } (w,v) \in R_j \right\}\right| = p_{ij}^k.
		\end{align*}   
	\end{enumerate}
	In addition, the association scheme $\mathfrak{X}$ is called \itbf{commutative} if $p_{ij}^k = p_{ji}^k$, for all $0\leq i,j,k \leq d$. The digraphs of the scheme $\mathfrak{X}$ are those of the form $(\Omega,R_i)$, where $1\leq i\leq d$.

	If $G\leq \sym{\Omega}$ is a finite transitive group and $\mathcal{O}$ is the set of all orbitals of $G$, then the pair $(\Omega,\mathcal{O})$ satisfies (AS\ref{partition})-(AS\ref{intersection-number}), so it is an association scheme. The association scheme $(\Omega,\mathcal{O})$ is called the \itbf{orbital scheme} of $G\leq \sym{\Omega}$, and its digraphs are called the orbital digraphs. It is well known that the orbital scheme of $G$ is commutative if and only if $(G,G_\omega)$ is a Gelfand pair\footnote{i.e., if $\triv{G_\omega}$ is the trivial character of $G_\omega$, then the induced character $\operatorname{Ind}^G\triv{G_\omega}$ is a sum of distinct irreducible characters of $G$} for any $\omega\in \Omega$. Association schemes that can be obtained as orbitals of certain transitive groups are called \itbf{Schurian}.
	
	For any $1\leq i\leq d$, the pair $(\Omega,R_i)$ is a simple\footnote{No arcs of the form $(u,u)$, for $u\in \Omega$. Moreover, there is no multiple arcs between two vertices.} regular\footnote{i.e., every vertex has equal in-degrees and out-degrees.} digraph if $i^*\neq i$, and is a simple undirected regular graph if $i^* = i$. For any $1\leq i\leq d,$ the \itbf{valency} of the relation $R_i$ is the degree of the graph $(\Omega,R_i)$ if $i^* = i$, and its in-degree if $i^*\neq i$. The valency of the diagonal relation $R_0$ is defined to be $1$. The association scheme $\mathfrak{X}$ is called \itbf{thin} if all valencies are equal to $1$, and is called \itbf{quasi-thin} if the valencies are equal to $1$ or $2$.
	
	Recall that an automorphism of a graph $X$ with vertex set $\Omega$ is a permutation of $\Omega$ preserving the edges and the non-edges of $X$. The set of all automorphisms of $X$ is a permutation group of $\Omega$. The automorphism group $\Aut{\mathfrak{X}}$ of the association scheme $\mathfrak{X} = \left(\Omega,\{R_i\}_{i=0}^d\right)$ is the intersection of all automorphism groups of the digraphs $(\Omega,R_i)$ for $1\leq i\leq d$. For any matrix $A$, let $A(u,v)$ be the $(u,v)$-entry of $A$. The adjacency matrix of $(\Omega,R_i)$, for $1\leq i\leq d$, is the matrix $A_i$ indexed by $\Omega$ in its rows and columns such that
	\begin{align*}
		A_i(u,v) =
		\begin{cases}
			1 & \mbox{ if } (u,v) \in R_i,\\
			0 & \mbox{ otherwise.}
		\end{cases}
	\end{align*}
	We let $A_0$ be the identity matrix indexed by $\Omega$ in its rows and columns.
	
	Let $\mathfrak{X} = (\Omega,\mathcal{R})$ be an association scheme, where $\mathcal{R} = \{R_0,R_1,\ldots,R_d\}$ and $R_0 = \{(u,u):u\in \Omega\}$. Fix $v\in \Omega$ and let $G = \Aut{\mathfrak{X}}$. For any $0\leq i\leq d$, we define the matrix $E_{i,v}^*$ to be the diagonal matrix indexed by $\Omega$ in its rows and columns, where
	\begin{align*}
		E_{i,v}^*(u,u) =
		\begin{cases}
			1 & \mbox{ if } (v,u) \in R_i,\\
			0 & \mbox{ otherwise.}
		\end{cases}
	\end{align*}  
	The Terwilliger algebra of $\mathfrak{X}$ with respect to $v$ is the subalgebra of the complex matrix algebra $\operatorname{M}_{\Omega}(\mathbb{C})$ given by
	\begin{align*}
		T_{v} = \left\langle A_0,A_1,\ldots,A_d, E_{0,v}^*,E_{1,v}^*,\ldots,E_{d,v}^*\right\rangle.
	\end{align*}
	Here, we emphasize on the fact that the Terwilliger algebra $T_v$ depends on the choice of $v$, and two Terwilliger algebras with respect to two different vertices need not be isomorphic.
		
	On the one hand, the Terwilliger algebra $T_v$ is a subalgebra of the centralizer algebra $\End{G_v}{\mathbb{C}^\Omega}$, which is the set of all matrices in $\operatorname{M}_{\Omega}(\mathbb{C})$ commuting with every permutation matrix of $G_v$. On the other hand, the Terwilliger algebra also contains the subspace
	\begin{align}
		T_{0,v} = \operatorname{Span}\left\{ E_{i,v}^*A_jE_{k,v}^*: 0\leq i,j,k\leq d \right\}.
	\end{align}
	Therefore, we have the inclusions
	\begin{align}
		T_{0,v} \subseteq T_v \subseteq  \End{G_v}{\mathbb{C}^\Omega}.\label{eq:inclusion}
	\end{align}
	The gap in dimensions between the subspaces in the above inclusions can be arbitrarily large. One example where such cases occur is with the association scheme of Paley graphs \cite{hanaki2023terwilliger}. Equality can also hold in \eqref{eq:inclusion} as in the case of the association schemes of the Higman-Sims graph and the McLaughlin graph (see \cite{Herman2026}). When $T_{0,v} = T_v =  \End{G_v}{\mathbb{C}^\Omega}$, one may think of the Terwilliger algebra to be as small as possible. Terwilliger algebras with this property admit a highly-regular combinatorial structure given by $T_{0,v} = T_v$, and a highly-symmetric algebraic structure given by $T_v = \End{G_v}{\mathbb{C}^\Omega}$.
	
	The subspace $T_{0,v}$ is an algebra if and only if $T_{0,v} = T_v$. In this case, the association scheme $\mathfrak{X}$ is called \itbf{triply regular.} Examples of triply-regular association schemes are the Hamming scheme $H(n,2)$, the association scheme of the strongly-regular graphs $H(2,d)$, and the affine polar graphs $\operatorname{VO}_{2m}^\varepsilon(2)$, where $\varepsilon = \pm$ and $m\geq 2$. We note that if there exists $v\in \Omega$ such that $T_{0,v}  = T_v$, then $T_{0,u}  = T_u$ for all $u \in \Omega$, whenever the underlying association scheme is $P$-polynomial \cite[Lemma~4.1.2]{guotriply}. 
	
	If $T_v = \End{G_v}{\mathbb{C}^\Omega}$, then the structure of $T_v$ is straightforward since $\End{G_v}{\mathbb{C}^\Omega}$ is completely determined by the action of $G_v$ on $\Omega \times \Omega$. Hence, one can study the Terwilliger algebra from a  purely algebraic viewpoint in this case. 
	
	If the automorphism group $G = \Aut{\mathfrak{X}}$ is transitive, then the Terwilliger algebras $T_u$ and $T_v$ are isomorphic for any $u,v\in \Omega$. Similarly, $\End{G_u}{\mathbb{C}^\Omega}$ and $\End{G_v}{\mathbb{C}^\Omega}$ are isomorphic for any $u,v\in \Omega$ whenever $G$ is transitive. Therefore, for any fixed $v\in \Omega$, we will adopt the notations $T$ for any Terwilliger algebra $T_v$, and $\tilde{T}$ for the centralizer algebra $\End{G_v}{\mathbb{C}^\Omega}$, whenever $G$ is transitive. 
	We note that the dimension of $T_{0,v}$ only depends on the number of non-zero intersection numbers and not on $v\in \Omega$. As we will deal with association schemes with transitive automorphism groups, we fix $v\in \Omega$ and abbreviate $T_{0,v},\ T_v, $ and $ \End{G_v}{\mathbb{C}^\Omega}$ as $T_0,\ T$, and $\tilde{T}$.
	
	We say that an association scheme $\mathfrak{X}$ is \itbf{triply transitive} if $G = \Aut{\mathfrak{X}}$ is transitive and $T_0 = T = \tilde{T}$. Therefore, a triply transitive association scheme is also triply regular. The converse of this statement does not hold in general: the association scheme of many orthogonal array graphs are triply regular but not triply transitive. Triply-transitive association schemes were first studied by Bannai and Munemasa in \cite{BannaiMunemasa95} for conjugacy class association schemes. In particular, they showed that the conjugacy class association scheme of abelian groups are triply transitive. In \cite{maleki2024terwilliger}, the first author extended this result of Bannai and Munemasa to certain metacyclic groups that are ``close'' to being abelian. See \cite{HMR2025,yang2025terwilliger,yang2024terwilliger} for results along the same line of research. In \cite{Herman2026}, triply-transitive association schemes arising from strongly-regular graphs were studied and almost classified except those coming from two families of graphs arising from finite geometry. Not much is known about triply-transitive association schemes except for these few examples (the conjugacy class association schemes and the association schemes of strongly-regular graphs). 
	
	This paper aims to provide more examples of triply-transitive association schemes. A by-product of this work is that we obtain a study of the Terwilliger algebras of quasi-thin association schemes over the complex numbers from a different perspective than those in the literature. We note that there are results on the Terwilliger algebras of such schemes in the literature, the majority of which are over fields of positive characteristic. In \cite{jiang2023terwilliger}, Jiang studied Terwilliger $\mathbb{F}$-algebras of quasi-thin association schemes, where $\mathbb{F}$ is a field of positive characteristic. Recently, Chen and Xi \cite{chen2025structure} studied Terwilliger algebras of quasi-thin association schemes defined over a field of arbitrary characteristic from the viewpoint of representation theory of algebras. In particular, they showed that Terwilliger algebras of quasi-thin association schemes are always quasi-hereditary cellular algebras
	in the sense of Cline-Parshall-Scott and of Graham-Lehrer. 
	The results in our paper are however more from a combinatorial and group-theoretical viewpoints.

	\subsection{Main results}

	If $\mathfrak{X}$ is a thin association scheme, then $\Aut{\mathfrak{X}}$ admits a regular subgroup. It is an easy exercise to show that the subspace $T_0$ in fact coincides with the full matrix algebra $\operatorname{M}_\Omega(\mathbb{C})$. Therefore, any thin association scheme is triply transitive. For the rest of the paper, we will focus on quasi-thin, and non-thin, association schemes. We will consider the following notations for our purpose.
	
	\begin{hyp}
		Consider a quasi-thin association scheme $\mathfrak{X} = (\Omega,\mathcal{R})$ which is not thin, where $\mathcal{R} = \{R_0,R_1,\ldots,R_d\}$, and $R_0 = \{ (v,v): v\in \Omega \}$. For any $1\leq i\leq d$, let $\Gamma_i$ be the digraph $(\Omega,R_i)$. Assume that the automorphism group $G = \Aut{\mathfrak{X}}$ is transitive\footnote{Therefore, Theorem~\ref{thm:Muzychuk-Hirasaka} implies that $\mathfrak{X}$ is Schurian}. In addition,  we adopt the following notations.
		\begin{enumerate}[(a)]
			\item Let $A_0 = I_{|\Omega|}$ be the identity matrix, and let $A_i$ be the adjacency matrix of the digraph $\Gamma_i$, for $1\leq i\leq d$. 
			\item Fix $v\in \Omega$, and denote the orbits of $G_v$ by
			\begin{align*}
				\Delta_i=\Delta_i(v) := \left\{ u\in \Omega: (v,u) \in R_i \right\}
			\end{align*}
			for $0\leq i\leq d$.
			\item Let $T_0, T, \mbox{ and }\tilde{T}$ be respectively the subspace $T_{0,v}$, the algebras $T_v$ and $\End{G_v}{\mathbb{C}^\Omega}$. We let $E_i^* = E_{i,v}^*$, for all $0\leq i\leq d$.
		\end{enumerate}\label{hyp2}
	\end{hyp}
	
	In order to prove the main results, we introduce certain combinatorial structures in quasi-thin association schemes. We call these structures diamond pairs.
	
	\begin{defn}
		Let $\mathfrak{X}$ be an association scheme as in Hypothesis~\ref{hyp2}. We say that a pair $(i,k)$, where $0\leq i,k\leq d$, is a \itbf{diamond pair} of $\mathfrak{X}$ if there exists $0\leq j\leq d$ such that $p_{ij}^k = |\Delta_i| = |\Delta_k|  = 2$. In addition, if $H = G_v$ and $H_{|\Delta_{i}\cup \Delta_{k}}$ is the restriction of $H$ onto $\Delta_{i}\cup \Delta_{k}$, then we say that $(i,k)$ is an \itbf{$H_{|\Delta_{i}\cup \Delta_{k}}$-diamond pair.}\label{def:diamond}
	\end{defn}
	
	As we will see later in Lemma~\ref{lem:stabilizer}, the group $H_{|\Delta_{i}\cup \Delta_{k}}$ is either isomorphic to the cyclic group $\mathbb{Z}_2$ or the Klein group $\mathbb{Z}_2\times \mathbb{Z}_2$. More details on diamond pairs are given later in Section~\ref{sect:diamond}. 
	
	The main result of this paper provides a complete characterization of Terwilliger algebras of quasi-thin association schemes with transitive automorphism group from a combinatorial and algebraic perspective. In particular, we show that triple transitivity is equivalent to the non-existence of $\mathbb{Z}_2$-diamond pairs. 
	
	We state the main result below.
	\begin{thm}
		If $\mathfrak{X}$ be an association scheme as in Hypothesis~\ref{hyp2}, then 
		$\mathfrak{X}$ is triply transitive if and only if it does not admit any $\mathbb{Z}_2$-diamond pairs.
		\label{thm:main1}
	\end{thm}
	
	We provide an infinite family of examples of triply transitive association schemes by using Theorem~\ref{thm:main1}. We note that if $G$ is a group and $h\in G$ is an involution, then the action of $G$ on cosets of $\langle h\rangle$ by left multiplication gives rise to a quasi-thin association scheme.
	
	\begin{thm}
		If $\mathfrak{X}$ is the orbital scheme of a transitive group $G\leq \sym{\Omega}$ with point stabilizer of order $2$, and admits a regular subgroup $R$, then $\mathfrak{X}$ is triply transitive.\label{thm:main}
	\end{thm}
	
	We provide some examples illustrating the application of Theorem~\ref{thm:main} in Examples~\ref{exa1} and \ref{exa2}.
	
	\subsection{Organization of the paper}
	This paper is organized as follows.
	In Section~\ref{sect:background}, we recall some results from permutation group theory and the general theory of Schurian association schemes.  In Section~\ref{sect:diamond}, we give an in-depth analysis of $T_0,T,$ and $\tilde{T}$ whenever the association scheme admits diamond pairs. We prove Theorem~\ref{thm:main1} in Section~\ref{sect:main-result}, and give two examples of how to apply Theorem~\ref{thm:main1} in Section~\ref{sect:examples}. The proof of Theorem~\ref{thm:main} is given in Section~\ref{sect:thm1.2}.
	
	\section{Background results}\label{sect:background}
	\subsection{Permutation group theory}
	Let $\Omega$ be a finite set and $G\leq \sym{\Omega}$ be a transitive group. For any $\omega\in \Omega$, we denote the point stabilizer of $\omega$ in $G$ by $G_\omega$. The permutation group $G\leq \sym{\Omega}$ is permutation equivalent to the group action of $G$ on right cosets of $G_\omega$ by right multiplication. Conversely, a group action is a permutation group whenever the action is faithful\footnote{i.e., the kernel of the action is trivial}.
	
	For any $v \in \Omega$, the \itbf{suborbits} of $G$ with respect to $v$ are the orbits of $G_v$ on $\Omega$. The suborbits of $G$ with respect to $v$ are in one-to-one correspondence with the orbitals of $G$, that is, the orbits of $G$ on $\Omega\times \Omega$. In particular, a suborbit $\Delta$ of $G$ with respect to $v$ corresponds to the neighbours of $v$ in the digraph $(\Omega,O)$, where $O = (v,u)^{G}$, and $u\in \Delta$. The rank of the transitive group $G\leq \sym{\Omega}$ is the number of suborbits of $G$ (or the number of orbitals). 

	\subsection{Algebras and vector spaces related to the Terwilliger algebra}
	Let $\mathfrak{X} = (\Omega,\{R_i\}_{i = 0}^d)$ be an association scheme with automorphism group $G = \Aut{\mathfrak{X}}$. Let $A_i$ be the adjacency matrix of the digraph $(\Omega,R_i)$, for $1\leq i\leq d$. The Bose-Mesner algebra or the adjacency algebra of $\mathfrak{X}$ is the subalgebra $\mathcal{B}$ of $\operatorname{M}_\Omega(\mathbb{C})$ generated by the adjacency matrices $(A_i)_{i=0,1,\ldots,d}$. The Bose-Mesner algebra acts by left multiplication on the vector space $V = \mathbb{C}^\Omega$. Thus, the vector space $V$ is a left $\mathcal{B}$-module. The centralizer algebra of $\mathfrak{X}$ is the algebra $\End{G_v}{V}$ consisting of all matrices in $\operatorname{M}_{\Omega}(\mathbb{C})$ commuting with the elements of $G_v$, viewed as permutation matrices on $\Omega$. 
	
	As explained in the previous section, the Terwilliger algebra $T_v$ is a subalgebra of the centralizer algebra $\End{G_v}{V}$. The next lemma gives the dimension of these centralizer algebras; its proof is omitted as it is a simple exercise.
	
	\begin{lem}
		For any $v\in \Omega$, the dimension of $\End{G_v}{V}$ is equal to the number of orbits of $G_v$ on $\Omega \times \Omega$.
	\end{lem}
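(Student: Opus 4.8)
The plan is to realize $\End{G_v}{V}$ concretely as a space of matrices that are constant on the orbits of $G_v$ acting diagonally on $\Omega\times\Omega$, and then exhibit the indicator matrices of these orbits as a basis. Write $H=G_v$ and, for $h\in H$, let $P_h\in\operatorname{M}_\Omega(\mathbb{C})$ be the associated permutation matrix, so that $P_h(u,w)=1$ exactly when $u=h\cdot w$ (with the convention matching the action used in the paper). By definition, $M\in\End{H}{V}$ if and only if $MP_h=P_hM$ for every $h\in H$.

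First I would rewrite the commuting relation entrywise. A direct computation gives $(P_h^{-1}MP_h)(u,w)=M(h\cdot u,\,h\cdot w)$, so $MP_h=P_hM$ for all $h\in H$ is equivalent to $M(h\cdot u,h\cdot w)=M(u,w)$ for all $h\in H$ and all $(u,w)\in\Omega\times\Omega$. In other words, $M\in\End{H}{V}$ precisely when the function $(u,w)\mapsto M(u,w)$ is constant on each orbit of $H$ on $\Omega\times\Omega$.

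Next, let $\mathcal{O}_1,\dots,\mathcal{O}_r$ be the orbits of $H$ on $\Omega\times\Omega$, and for each $t$ let $B_t\in\operatorname{M}_\Omega(\mathbb{C})$ be the $0$--$1$ matrix with $B_t(u,w)=1$ iff $(u,w)\in\mathcal{O}_t$. By the previous paragraph each $B_t$ lies in $\End{H}{V}$, and since the $\mathcal{O}_t$ are pairwise disjoint with union $\Omega\times\Omega$, the matrices $B_1,\dots,B_r$ have disjoint supports and are therefore linearly independent. Conversely, any $M\in\End{H}{V}$ is constant on each $\mathcal{O}_t$, say with value $c_t$, whence $M=\sum_{t=1}^r c_t B_t$. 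Thus $\{B_1,\dots,B_r\}$ is a basis of $\End{H}{V}$, and $\dim\End{G_v}{V}=r$, the number of orbits of $G_v$ on $\Omega\times\Omega$.

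I do not anticipate a genuine obstacle here; the only point requiring care is bookkeeping the direction of the group action so that conjugation by $P_h$ translates correctly into the diagonal action on pairs, which is why the argument is phrased through the entrywise identity $(P_h^{-1}MP_h)(u,w)=M(h\cdot u,h\cdot w)$ rather than left to the reader. This is exactly the "simple exercise" alluded to in the statement.
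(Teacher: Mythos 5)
Your proof is correct and is exactly the standard argument the paper has in mind when it omits the proof as a ``simple exercise'': the commuting condition is equivalent to constancy on the orbits of $G_v$ on $\Omega\times\Omega$, and the orbit indicator matrices form a basis. Nothing to add.
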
 
	
	Assume now that the automorphism group $G$ of $\mathfrak{X}$ is transitive. Consequently, we may fix $v\in \Omega$ and use $T_0,T,$ and $ \tilde{T}$ instead of $T_{0,v},T_v,$ and $\End{G_v}{\mathbb{C}^\Omega}$. Let $\Delta_0 = \{v\},\Delta_1,\ldots,\Delta_d$ be the orbits of $G_v$, and for $0\leq i\leq d$, choose a representative $u_i \in \Delta_i$. If $O = (u,w)^{G_v}$ is an orbital of $G_v$, then there exists $0\leq i\leq d$ such that $u \in \Delta_i = u_i^{G_v}$. Therefore, there exists $w^\prime\in \Omega$ such that $O = (u,w)^{G_v} = (u_i,w^\prime)^{G_v}$. We deduce that the number of orbits of $G_v$ on $\Omega \times \Omega$ whose first component is in $\Delta_i$ is equal to the number of orbits of $G_v\cap G_{u_i}$ on $\Omega$. Let $M_i$ be the number of orbits of $G_v\cap G_{u_i}$ on $\Omega$. Then, it is not hard to see that
	\begin{align*}
		\dim (\tilde{T}) = \sum_{i=0}^d M_i.
	\end{align*}
	
	Another important subspace of the Terwilliger algebra $T$ is the subspace $T_0$. Its dimension can be computed using the intersection numbers of $\mathfrak{X}$, which is given in the next lemma.
	\begin{lem}{\cite{BannaiMunemasa95}}
		For any $v\in \Omega$, $E_{i}^*A_jE_{k}^* \neq 0$ if and only if $p_{ij}^k \neq 0$. Moreover, we have
		\begin{align*}
			\dim T_0 = \left|\left\{ (i,j,k): p_{ij}^k \neq 0 \right\}\right|,
		\end{align*}
		and 
		\begin{align*}
			\dim E_{i}^*T_0 E_{k}^* = \left|\left\{ j: p_{ij}^k \neq 0 \right\}\right|,
		\end{align*}
		for $0\leq i,k\leq d$.\label{lem:dim-T0}
	\end{lem}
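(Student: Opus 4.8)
The plan is to reduce everything to a direct inspection of the matrices $E_{i,v}^*A_jE_{k,v}^*$ and the combinatorics of their supports. Since $E_{i,v}^*$ and $E_{k,v}^*$ are diagonal, the $(u,w)$-entry of $E_{i,v}^*A_jE_{k,v}^*$ equals $E_{i,v}^*(u,u)\,A_j(u,w)\,E_{k,v}^*(w,w)$, which is $1$ exactly when $(v,u)\in R_i$, $(u,w)\in R_j$ and $(v,w)\in R_k$, and is $0$ otherwise. So $E_{i,v}^*A_jE_{k,v}^*$ is the $\{0,1\}$-matrix whose support is $S_{ijk}=\{(u,w): (v,u)\in R_i,\ (u,w)\in R_j,\ (v,w)\in R_k\}$.

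For the first claim I would fix a column index $w$ with $(v,w)\in R_k$ and apply (AS\ref{intersection-number}) to the pair $(v,w)\in R_k$: the number of $u$ with $(v,u)\in R_i$ and $(u,w)\in R_j$ is precisely $p_{ij}^k$. Hence every column of $E_{i,v}^*A_jE_{k,v}^*$ indexed by a $w$ with $(v,w)\in R_k$ has exactly $p_{ij}^k$ ones, and all other columns are zero. Because $\{w:(v,w)\in R_k\}$ is always nonempty — indeed its cardinality is $p_{kk^*}^0$ by (AS\ref{transposition}) and (AS\ref{intersection-number}), and this is at least $1$ since $R_k\neq\emptyset$ — we conclude $E_{i,v}^*A_jE_{k,v}^*\neq 0$ if and only if $p_{ij}^k\neq 0$.

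Next I would show that the nonzero matrices among the $E_{i,v}^*A_jE_{k,v}^*$ are linearly independent by checking that the supports $S_{ijk}$ are pairwise disjoint. If $(u,w)\in S_{ijk}$, then by (AS\ref{partition}) the index $i$ is the unique one with $(v,u)\in R_i$, the index $k$ is the unique one with $(v,w)\in R_k$, and the index $j$ is the unique one with $(u,w)\in R_j$; so $(u,w)$ lies in at most one $S_{ijk}$. Thus $T_v^0$ is spanned by $\{0,1\}$-matrices with pairwise disjoint supports, whence $\dim T_v^0=|\{(i,j,k):E_{i,v}^*A_jE_{k,v}^*\neq 0\}|=|\{(i,j,k):p_{ij}^k\neq 0\}|$. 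For the last equality I would use that the $E_{i,v}^*$ are orthogonal idempotents, so $E_{i,v}^*\big(E_{i',v}^*A_jE_{k',v}^*\big)E_{k,v}^*=\delta_{ii'}\delta_{kk'}E_{i,v}^*A_jE_{k,v}^*$; hence $E_{i,v}^*T_v^0E_{k,v}^*=\operatorname{Span}\{E_{i,v}^*A_jE_{k,v}^*:0\le j\le d\}$, and the same disjoint-support argument — now with $j$ as the only free index — yields $\dim E_{i,v}^*T_v^0E_{k,v}^*=|\{j:p_{ij}^k\neq 0\}|$.

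There is no serious obstacle here; the argument is essentially bookkeeping with supports of $\{0,1\}$-matrices. The only point requiring a moment's care is confirming that for every $v$ and every $k$ there exists some $w$ with $(v,w)\in R_k$, so that a nonzero intersection number $p_{ij}^k$ genuinely forces $E_{i,v}^*A_jE_{k,v}^*$ to be nonzero; this is the standard fact that relational valencies in an association scheme are well-defined and positive.
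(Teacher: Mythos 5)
Your proof is correct. The paper itself gives no proof of this lemma (it is quoted from Bannai--Munemasa), and your support-counting argument --- entrywise description of $E_{i,v}^*A_jE_{k,v}^*$, columnwise application of (AS3), pairwise disjointness of supports via the partition property, and the Peirce-type identity for the orthogonal idempotents $E_{i,v}^*$ --- is exactly the standard one, with the positivity of the valency $p_{kk^*}^0$ correctly accounted for.
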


	\subsection{Triply-transitive association schemes}
	Let $\mathfrak{X} = (\Omega,\{R_i\}_{i = 0}^d)$ be an association scheme with transitive automorphism group $G = \Aut{\mathfrak{X}}$. Fix $v\in \Omega$. For any $0\leq i\leq d$, we define 
	\begin{align*}
		N_i = N_i(v) := \left\{ u\in \Omega: (v,u) \in R_i \right\}.
	\end{align*}
	Let $\Delta_0 = \{v\},\Delta_1,\ldots,\Delta_r$ be the orbits of $G_v$.
	Note that $N_i$ is a union of orbits of $G_v$, and $\mathfrak{X}$ is Schurian if and only if $d = r$, and so up to reordering, we may assume that $N_i = \Delta_i$, for $0\leq i\leq d$.
	As we saw before, $\mathfrak{X}$ is triply transitive if $G$ is transitive and $T_0 = T = \tilde{T}$. We show that such association schemes are Schurian.
	
	\begin{lem}
		If $\mathfrak{X}$ is triply transitive, then it is Schurian.\label{lem:Schurian}
	\end{lem}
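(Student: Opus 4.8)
The plan is to compare the diagonal matrices lying in $T^0$ with those lying in $\tilde{T}$; equality of these two spaces will force $d = r$, which by the remark recalled just before the statement is equivalent to $\mathfrak{X}$ being Schurian. Only the equality $T^0 = \tilde{T}$ will be needed, the intermediate algebra $T$ playing no role.

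First I would determine the diagonal matrices in $T^0$. The spanning matrices $E_{i,v}^*A_jE_{k,v}^*$ of $T^0$ have pairwise disjoint supports, since for a position $(u,w)$ the indices $i$, $k$, $j$ are recovered as the unique relations with $(v,u)\in R_i$, $(v,w)\in R_k$ and $(u,w)\in R_j$; this is exactly what yields $\dim T^0 = |\{(i,j,k):p_{ij}^k\neq 0\}|$ in Lemma~\ref{lem:dim-T0}. Hence a linear combination of these matrices is diagonal if and only if every matrix actually occurring in it is itself diagonal. Now $E_{i,v}^*A_jE_{k,v}^*$ can be diagonal only if $j = 0$: whenever $p_{ij}^k\neq 0$ it has a nonzero entry at some position $(u,w)$ with $(u,w)\in R_j$, and if $j\neq 0$ then $u\neq w$. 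Moreover $E_{i,v}^*A_0E_{k,v}^* = E_{i,v}^*E_{k,v}^*$, which equals $E_{i,v}^*$ when $i = k$ and $0$ otherwise, since the $E_{i,v}^*$ are orthogonal idempotents. Therefore the diagonal matrices in $T^0$ are precisely the elements of $\operatorname{Span}\{E_{0,v}^*,\dots,E_{d,v}^*\}$, a space of dimension $d+1$ because the $E_{i,v}^*$ have disjoint nonempty supports (the sets $N_0(v),\dots,N_d(v)$ partition $\Omega$).

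Next I would note that a diagonal matrix commutes with every permutation matrix of $G_v$ if and only if its diagonal is constant on each orbit of $G_v$; hence the diagonal matrices in $\tilde{T} = \End{G_v}{V}$ form a space of dimension $r+1$, spanned by the characteristic matrices of the orbits $\Delta_0(v),\dots,\Delta_r(v)$. Since $T^0 = \tilde{T}$, these two spaces of diagonal matrices coincide, so $d+1 = r+1$, i.e.\ $d = r$, and $\mathfrak{X}$ is Schurian. I do not expect a genuine obstacle here; the only point requiring a little care is the disjoint-support observation for the generators of $T^0$, which simultaneously underpins the dimension formula in Lemma~\ref{lem:dim-T0} and the identification of the diagonal part of $T^0$.
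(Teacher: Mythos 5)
Your proof is correct, and it takes a route that is parallel to, but genuinely distinct from, the paper's. The paper works with the Peirce decomposition of $T_v^0$ and $\tilde{T}_v$ relative to the idempotents $E_{i,v}^*$ and compares the corners $E_{0,v}^*T_v^0E_{k,v}^*$ and $E_{0,v}^*\tilde{T}_vE_{k,v}^*$: the first has dimension $\left|\left\{j: p_{0j}^k\neq 0\right\}\right|=1$ by Lemma~\ref{lem:dim-T0}, while the second has dimension equal to the number of orbits of $G_v$ on $N_k(v)$, so equality forces $G_v$ to be transitive on each $N_k(v)$ and hence $N_k(v)=\Delta_k(v)$ for every $k$. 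You instead compare the diagonal parts of the two spaces, which amounts to looking at the corners $E_{i,v}^*\cdot E_{i,v}^*$ rather than the row of corners $E_{0,v}^*\cdot E_{k,v}^*$. Your disjoint-support observation for the generators $E_{i,v}^*A_jE_{k,v}^*$ is sound (it is the same fact that underlies the dimension formula of Lemma~\ref{lem:dim-T0}), and the identifications of the diagonal of $T_v^0$ with $\operatorname{Span}\{E_{0,v}^*,\dots,E_{d,v}^*\}$ and of the diagonal of $\tilde{T}_v$ with the matrices whose diagonal is constant on $G_v$-orbits are both correct, giving $d+1=r+1$. The paper's version yields slightly more information along the way, namely that each relation class $N_k(v)$ is a single suborbit; your argument only counts $d=r$ and then invokes the equivalence recalled just before the statement, but since each $N_i(v)$ is a union of suborbits and both families partition $\Omega$, the count $d=r$ does force the two partitions to coincide, so nothing is lost.
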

	\begin{proof}
		Since $\mathfrak{X}$ is triply transitive, we have 
		\begin{align*}
			\dim E_{i}^* T_0 E_{k}^* = \dim E_{i}^* \tilde{T} E_{k}^*,
		\end{align*}
		for all $0\leq i,k\leq d$. On the one hand, for any $0\leq k\leq d$, we have
		\begin{align*}
			\dim E_{0}^* T_0 E_{k}^* =\left|\left\{ 0\leq j \leq d: p_{0j}^k \neq 0 \right\}\right| = \left|\{k\}\right| = 1.
		\end{align*}
		On the other hand, for any $0\leq k\leq d$, we note that $\dim E_{0}^* \tilde{T} E_{k}^*$ is the number of orbits of $G_v$ on $N_k.$ We deduce that $G_v$ is transitive on every $N_k$, for $0\leq k\leq d$. Hence, $r = d$, and $\mathfrak{X}$ is Schurian.
	\end{proof}
	
	\subsection{Thin and quasi-thin association schemes}
	
	Let $\mathfrak{X} = \left(\Omega,\{R_i\}_{i = 0}^d\right)$ be an association scheme with automorphism group $G = \Aut{\mathfrak{X}}$. If $\mathfrak{X}$ is thin, then it was shown in \cite{evdokimov2000separability} that it is Schurian. Consequently, for any $\omega\in \Omega$, the orbits of $G_\omega$ are of size $1$ and therefore $G$ is a regular group. 
	If $\mathfrak{X}$ is a quasi-thin association scheme however, then it can be non-Schurian. For instance, the quasi-thin association scheme \verb*|as16n173| in \cite{HM} is non-Schurian, and in addition its automorphism group is not transitive. It turns out that this is no coincidence, and holds more generally. Indeed, Hirasaka and Muzychuk \cite{Hirasaka2002,Hirasaka2002a} showed that a quasi-thin association scheme has a transitive automorphism group if and only if it is Schurian. We state this result in the next theorem as we will refer to it later.
	\begin{thm}[Hirasaka-Muzychuk]
		If $\mathfrak{X}$ is a quasi-thin association schemes, then $\Aut{\mathfrak{X}}$ is transitive if and only if $\mathfrak{X}$ is Schurian.\label{thm:Muzychuk-Hirasaka}
	\end{thm}


	For the remainder of this section, we let $\mathfrak{X}$ be a quasi-thin association scheme which is not thin, and admits a transitive automorphism group. By Theorem~\ref{thm:Muzychuk-Hirasaka}, the association scheme $\mathfrak{X}$ is Schurian. Hence, the valencies of $\mathfrak{X}$ are exactly the numbers $|\Delta_0|,|\Delta_1|,\ldots,$ and $|\Delta_d|$.
	Using the fact that the valencies of $\mathfrak{X}$ are $1$ or $2$, we can derive some results on the point stabilizer of $G = \Aut{\mathfrak{X}}$ as follows.
	\begin{lem}
		Let $v\in \Omega$ and denote the subgroup $ G_v$ by $H$. The following statements hold.
		\begin{enumerate}[(i)]
			\item $H$ is an elementary abelian $2$-group.
			\item If $\Delta_i = \{u_1,u_2\}$ and $\Delta_k = \{v_1,v_2\}$ are two orbits of $H$, then the restriction $H_{|{\Delta_i \cup \Delta_k}}$ of $H$ on $\Delta_i \cup \Delta_k$ is isomorphic to $\mathbb{Z}_2$ or $\mathbb{Z}_2\times \mathbb{Z}_2$.
		\end{enumerate}
		\label{lem:stabilizer}
	\end{lem}
	\begin{proof}
		(i) Since $\mathfrak{X}$ is not thin, $H\neq \{1\}$. Given $h\in H$ such that $h\neq 1$, it is clear that $\langle h\rangle$ also has orbits of length $1$ or $2$, and thus $h$ has order equal to $2$. Consequently, every non-identity element of $H$ has order $2$, so $H$ is an elementary abelian $2$-group.
		
		(ii) The restriction $H_{|{\Delta_i \cup \Delta_k}}$ is intransitive on $\Delta_i\cup \Delta_k$, but is transitive on $\Delta_i$ and $\Delta_k$. The only subgroups of $\sym{\Delta_i\cup \Delta_k}$ that have such properties are isomorphic to $\mathbb{Z}_2$ or $\mathbb{Z}_2\times \mathbb{Z}_2$.
	\end{proof}

	\section{Diamond pairs}\label{sect:diamond}
	Diamond pairs were already introduced in Definition~\ref{def:diamond}. In this section, we give an in-depth analysis of diamond pairs and the combinatorial consequences of their existence. Henceforth, we let $\mathfrak{X} = \left(\Omega,\mathcal{R}\right)$ be a quasi-thin association scheme as in Hypothesis~\ref{hyp2}. 
	
	Recall that the pair $(i,k)$, where $0\leq i,k\leq d$, is a {diamond pair} of $\mathfrak{X}$ if there exists $0\leq j\leq d$ such that $p_{ij}^k = |\Delta_i| = |\Delta_k|  = 2$.
	In other words, if $(i,k)$ is a diamond pair, then $\Delta_i = \{u_1,u_2\}$, and $\Delta_k = \{w_1,w_2\}$, and we have the configuration in Figure~\ref{fig:lozenge-triples}. Note that if $(i,k)$ is a diamond pair, then $|\Delta_j| = 2$.
	\begin{figure}[H]
		\hspace*{1cm}
		\begin{tikzpicture}[scale=0.4]
			\SetGraphUnit{5}
			\GraphInit[vstyle=Normal]
			\Vertex[x=0,y=0,Math]{v}
			\Vertex[x=10,y=0,Math]{w_1}
			\Vertex[x=0,y=6,Math]{u_1}
			\Vertex[x=0,y=-6,Math]{u_2}
			\Vertex[x=-10,y=0,Math]{w_2}
			\SetUpEdge[style={->,ultra thick},color=black]
			\foreach \v in {w_1,w_2}{%
				\Edge[label=$k$](v)(\v)}
			\SetUpEdge[style={-> ,ultra thick},color=red]
			\foreach \v in {u_1,u_2}{%
				\Edge[label=$i$](v)(\v)}
			\SetUpEdge[style={-> ,ultra thick},color=blue]
			\foreach \v in {u_1,u_2}{%
				\Edge[label=$j$](\v)(w_1)
				\Edge[label=$j$](\v)(w_2)}
		\end{tikzpicture}
		\caption{Configuration for diamond pairs.}\label{fig:lozenge-triples}
	\end{figure}
	If $(i,k)$ is not a diamond pair, then either $|\Delta_i| + |\Delta_k| \leq 3$ or $\Delta_i = \{u_1,u_2\}$ and $\Delta_k = \{w_1,w_2\}$, and $p_{ij}^k\neq 2$ for all $0\leq j\leq d$. By definition of an association scheme, there exists $0\leq j\leq d$ such that $p_{ij}^k \neq 0$. If $p_{ij}^k = 1$, for some $0\leq j\leq d$, then $E_i^*A_jE_k^*$ is non-zero, and using the fact that $\mathfrak{X}$ is quasi-thin, there exist a unique $0\leq j^\prime \leq d$ such that $p_{ij^\prime}^k = 1$ and $E_i^*A_{j^\prime}E_k^*$ is non-zero. Therefore, if $(i,k)$ is not a diamond pair and $|\Delta_{i}| = |\Delta_k| = 2$, then we have the configuration in Figure~\ref{fig:non-diamond} for some $0\leq j,j^\prime \leq d$.
	
	\begin{figure}[H]
			\begin{tikzpicture}[scale=0.4]
				\SetGraphUnit{5}
				\GraphInit[vstyle=Normal]
				\Vertex[x=0,y=0,Math]{v}
				\Vertex[x=10,y=0,Math]{w_1}
				\Vertex[x=0,y=6,Math]{u_1}
				\Vertex[x=0,y=-6,Math]{u_2}
				\Vertex[x=-10,y=0,Math]{w_2}
				\SetUpEdge[style={->,ultra thick},color=black]
				\foreach \v in {w_1,w_2}{
					\Edge[label=$k$](v)(\v)}
				\SetUpEdge[style={-> ,ultra thick},color=red]
				\foreach \v in {u_1,u_2}{%
					\Edge[label=$i$](v)(\v)}
				\SetUpEdge[style={-> ,ultra thick},color=blue]
				\Edge[label=$j$](u_1)(w_1)
				\Edge[label=$j$](u_2)(w_2)
				\SetUpEdge[style={-> ,ultra thick},color=green]
				\Edge[label=$j^\prime$](u_2)(w_1)
				\Edge[label=$j^\prime$](u_1)(w_2)	
			\end{tikzpicture}
		\caption{Configuration for non-diamond pairs.}\label{fig:non-diamond}
	\end{figure}

	By Lemma~\ref{lem:stabilizer}, we know that the restriction of a point stabilizer of a union of two orbits of size $2$ must be isomorphic to the cyclic group $\mathbb{Z}_2$ or the Klein group $\mathbb{Z}_2\times \mathbb{Z}_2$. Therefore, there are two types of diamond pairs, depending on the orbits of point stabilizer $G_v$, which are introduced below. 
	\begin{defn}
		Assume that $(i,k)$ is a diamond pair of $\mathfrak{X}$. Fix $v\in \Omega$ and let $H = G_v$. 
		\begin{itemize}
			\item If $H_{|\Delta_i \cup \Delta_k} \cong \mathbb{Z}_2$, then $(i,k)$ is a \itbf{$\mathbb{Z}_2$-diamond pair}.
			\item If $H_{|\Delta_i \cup \Delta_k} \cong \mathbb{Z}_2 \times \mathbb{Z}_2$, then $(i,k)$ is a \itbf{$\mathbb{Z}_2\times \mathbb{Z}_2$-diamond pair}.
		\end{itemize}
	\end{defn}
	
	Since the properties in the above definition depends on $G_v$, these two types of diamond pairs are indistinguishable from the viewpoint of the subspace $T_0$.
	
	For the remainder of this section, we will exhibit the properties that diamond pairs (in the underlying association scheme) impose on the subspace $T_0$ and the algebra $\tilde{T}$.
	
	\subsection{The subspace $T_0$}
	
	Given a matrix $A\in \operatorname{M}_{\Omega}(\mathbb{C})$, the matrix $E_{i}^*AE_{k}^*$ is a matrix whose entries outside $\Delta_i \times \Delta_k$ are equal to $0$, and those within $\Delta_i \times \Delta_k$ are equal to the entries of $A$. For any $A\in \operatorname{M}_\Omega(\mathbb{C})$, we let $\overline{E_{i}^*A E_{k}^*}$ be the submatrix of $A$ determined by the rows $\Delta_i$ and columns $\Delta_k$. For any $n\geq 2$, we denote the identity matrix and the all-ones matrix of size $n$ by $I_n$ and  $J_n$, respectively.
	
	We state the following propositions whose proofs are straightforward by counting the number of non-zero intersection numbers and using Figure~\ref{fig:lozenge-triples} or Figure~\ref{fig:non-diamond}.
	\begin{prop}
		Let $0\leq i,k\leq d$. If $(i,k)$ is not a diamond pair and $|\Delta_i| = |\Delta_k| = 2$, then 
		there exists a unique pair $(j,j^\prime)$ of distinct integers with $0\leq j,j^\prime \leq d$ such that 
		\begin{align*}
			E_i^* T_0 E_k^* = \operatorname{Span} \{E_i^* A_j E_k^*,E_i^* A_{j^\prime} E_k^*\}.
		\end{align*}
		In particular, $\{\overline{E_i^* A_j E_k^*},\overline{E_i^* A_{j^\prime} E_k^*}\} = \{ I_2, J_2-I_2 \}$, and $\dim E_i^* T_0 E_k^* =2$. \label{prop:non-diamond}
	\end{prop}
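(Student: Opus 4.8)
The plan is to pass to the local $2\times2$ picture and classify the submatrices $\overline{E_i^*A_\ell E_k^*}$. Since the conclusion is phrased in terms of $I_2$ and $J_2-I_2$, the relevant situation is the one in Figure~\ref{fig:non-diamond}, so we work with $|N_i|=|N_k|=2$; write $N_i=\{y_1,y_2\}$, $N_k=\{u,z\}$, and for $0\le\ell\le d$ set $M_\ell:=\overline{E_i^*A_\ell E_k^*}$, the $2\times2$ zero--one matrix with rows indexed by $N_i$, columns by $N_k$, and $(M_\ell)_{y,w}=1$ exactly when $(y,w)\in R_\ell$. By Lemma~\ref{lem:dim-T0} we have $E_i^*A_\ell E_k^*\neq 0$ iff $p_{i\ell}^k\neq0$, together with
\begin{align*}
	E_i^*T^0E_k^*=\operatorname{Span}\{E_i^*A_\ell E_k^*:p_{i\ell}^k\neq0\},\qquad \dim E_i^*T^0E_k^*=\left|\{\ell:p_{i\ell}^k\neq0\}\right|,
\end{align*}
so it suffices to determine which $M_\ell$ are nonzero and what they are.

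Next I would record two elementary facts. Since $\mathcal R$ partitions $\Omega\times\Omega$, every entry of $\sum_{\ell=0}^d M_\ell$ equals $1$, i.e. $\sum_{\ell=0}^d M_\ell=J_2$. Also, for a fixed $w\in N_k$ we have $(v,w)\in R_k$, so by (AS\ref{intersection-number}) the number of $y\in N_i$ with $(y,w)\in R_\ell$ equals $p_{i\ell}^k$; hence both columns of $M_\ell$ sum to $p_{i\ell}^k$. A symmetric computation, using that $(y_1,v),(y_2,v)\in R_{i^*}$ and that $w\in N_k\iff(w,v)\in R_{k^*}$, shows via (AS\ref{intersection-number}) that both rows of $M_\ell$ sum to $p_{\ell k^*}^{i^*}$, independently of the row. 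A $2\times2$ zero--one matrix with constant row sums and constant column sums must be one of $0$, $I_2$, $J_2-I_2$, $J_2$, the common value of the sums being $0,1,1,2$ respectively.

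Now $(i,k)$ not being a diamond pair means precisely that no $\ell$ satisfies $p_{i\ell}^k=|N_i|=|N_k|=2$, i.e. no $M_\ell$ equals $J_2$. Combined with $\sum_\ell M_\ell=J_2$ and the classification above, exactly two indices, say $j$ and $j'$, have $M_j\neq0\neq M_{j'}$, and since $M_j+M_{j'}=J_2$ they cannot both be $I_2$ or both be $J_2-I_2$; thus $\{M_j,M_{j'}\}=\{I_2,J_2-I_2\}$, and all other $M_\ell$ vanish. The unordered pair $\{j,j'\}$ is uniquely determined since the $M_\ell$ are determined by $\mathfrak X$. As $p_{ij}^k=p_{ij'}^k=1$ and $p_{i\ell}^k=0$ otherwise, Lemma~\ref{lem:dim-T0} yields $\dim E_i^*T^0E_k^*=2$ and $E_i^*T^0E_k^*=\operatorname{Span}\{E_i^*A_jE_k^*,E_i^*A_{j'}E_k^*\}$, as claimed.

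The only step needing care is the constancy of the row sums of $M_\ell$: one must apply (AS\ref{intersection-number}) to the correct pair of relations, tracking the involution $*$; without it one cannot exclude matrices such as $\left(\begin{smallmatrix}1&1\\0&0\end{smallmatrix}\right)$, which already have constant column sums. Everything else is bookkeeping with $2\times2$ zero--one matrices. I would also remark that the remaining non-diamond possibility $|N_i|+|N_k|\le3$ is disposed of by the same column-sum bookkeeping with matrices of smaller size, so no genuine new case arises.
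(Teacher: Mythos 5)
Your proof is correct and is essentially the argument the paper intends: the paper states this proposition without proof, calling it ``straightforward by counting the number of non-zero intersection numbers and using Figure~\ref{fig:non-diamond}'', and your bookkeeping --- column sums of $\overline{E_i^*A_\ell E_k^*}$ equal to $p_{i\ell}^{k}$ and row sums equal to $p_{\ell k^*}^{i^*}$, which together force each nonzero block to be $I_2$, $J_2-I_2$, or $J_2$ and in particular exclude $\left(\begin{smallmatrix}1&1\\0&0\end{smallmatrix}\right)$ --- is exactly that counting made precise. The only loose point is your closing remark: when $|N_i|+|N_k|\le 3$ the stated conclusion ($\dim E_i^*T^0E_k^*=2$ with blocks $I_2$ and $J_2-I_2$) does not literally hold (e.g.\ the dimension is $1$ when both valences are $1$), but the proposition is clearly meant only for the configuration of Figure~\ref{fig:non-diamond}, which is the case you treat.
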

	\begin{prop}
		If $(i,k)$ is a diamond pair, then there exists a unique $0\leq j\leq d$ such that $E_i^* T_0 E_k^* = \operatorname{Span} \{E_i^* A_j E_k^*\}$ where $\overline{E_i^*A_jE_k^*} = J_2$. In particular, we have $\dim E_i^* T_0 E_k^* = 1$.\label{prop:diamond}
	\end{prop}
	
	Propositions~\ref{prop:non-diamond} and \ref{prop:diamond} provide an important tool to understanding the dimension of the subspace $T_0$. The case where $|\Delta_{i}|+|\Delta_{k}|\leq 3$ will be considered in the next subsection.

	\subsection{The centralizer algebra}\label{sect:centralizer-algebras}
	

	
	
	We will determine precisely the pair of integers $0\leq i,k\leq d$ such that the subspaces $E_i^*T_0E_k^*$ and $E_i^*\tilde{T} E_k^*$ coincide. For $u,w\in \Omega$, we define the matrix $E_{u,w} \in \operatorname{M}_\Omega(\mathbb{C})$ indexed by $\Omega$ in its rows and columns, such that the $(u,w)$-entry of $E_{u,w}$ is $1$, and every other entry is equal to $0$. For any orbit $O$ of a permutation group $H\leq G$ on $\Omega \times \Omega$, we define the matrix $A(\Omega,O)$ to be the adjacency matrix of the digraph $(\Omega,O)$. If $O_0,O_1,\ldots,O_l$ are the orbits of $G_v$ on $\Omega\times \Omega$, then it is well known that 
	\begin{align*}
		\tilde{T} = \operatorname{Span}\left\{ A(\Omega,O_i): 0\leq i\leq l \right\}.
	\end{align*}
	
	The following lemma will be useful in the proof of the main result of this section.
	\begin{lem}
		For any $0\leq i\leq d$, the orbital $R_i$ of $G$ on $\Omega$ is a union of orbitals of $G_v$ on $\Omega$.\label{lem:orbital-subgroups}
	\end{lem}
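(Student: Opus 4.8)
The plan is to work at the level of relations rather than matrices. The statement $R_i$ is a union of orbitals of $G_v$ on $\Omega$ means: if $(x,y)$ and $(x',y')$ lie in the same $G_v$-orbit on $\Omega\times\Omega$, and $(x,y)\in R_i$, then $(x',y')\in R_i$. But this is immediate: $G_v\leq G=\Aut{\mathfrak X}$, and every element of $\Aut{\mathfrak X}$ preserves each relation $R_i$ (by definition of the automorphism group of an association scheme, an automorphism maps $(x,y)\in R_i$ to a pair in $R_i$). Hence each $G$-orbital $R_i$ is a union of $G_v$-orbitals, and in fact a union of $H$-orbitals for any subgroup $H\leq G$. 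So the core of the argument is one line; the only thing to spell out is the terminology.

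Concretely, first I would fix $i$ and let $\mathcal{O}$ be a $G_v$-orbital on $\Omega\times\Omega$ with $\mathcal{O}\cap R_i\neq\varnothing$; pick $(x,y)\in\mathcal{O}\cap R_i$. For an arbitrary $(x',y')\in\mathcal{O}$ there is $g\in G_v$ with $(x,y)^g=(x',y')$, i.e. $x^g=x'$ and $y^g=y'$. Since $g\in G_v\leq\Aut{\mathfrak X}$, and automorphisms of $\mathfrak X$ permute the relations fixing each one setwise, $(x,y)\in R_i$ forces $(x',y')=(x^g,y^g)\in R_i$. Thus $\mathcal{O}\subseteq R_i$, which shows $R_i$ is a union of those $G_v$-orbitals it meets. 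Finally, since $G_v$ is transitive on $N_i(v)=\Delta_i(v)$ (Schurian hypothesis, $d=r$), one can add the remark that the relevant $G_v$-orbitals are exactly those of the form $(v',w)^{G_v}$ with $v'\in N_j(v)$ for appropriate $j$, matching the decomposition discussed in Section~\ref{sect:background} via $\dim\tilde T=\sum_i N_i$; but this refinement is not needed for the bare statement.

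There is essentially no obstacle here: the lemma is a definitional consequence of $G_v\leq\Aut{\mathfrak X}$ together with the fact that automorphisms of an association scheme stabilize each relation. The only mild care needed is to make sure the reader sees why automorphisms fix relations \emph{setwise} — this is built into the definition $\Aut{\mathfrak X}=\bigcap_i\Aut{(\Omega,R_i)}$ given in the introduction, since preserving the digraph $(\Omega,R_i)$ means preserving its arc set $R_i$. I would therefore keep the proof to two or three sentences, citing that definition, and not invoke anything from the quasi-thin structure, as the statement holds for arbitrary association schemes.
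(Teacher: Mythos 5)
Your proof is correct and is essentially the paper's argument: the paper disposes of this lemma in one line by noting that $G_v\leq G$, so each $G$-orbit on $\Omega\times\Omega$ splits into $G_v$-orbits, which is exactly the content of your spelled-out verification that elements of $G_v\leq\Aut{\mathfrak X}$ preserve each $R_i$ setwise.
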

	\begin{proof}
		The proof immediately follows from the fact that $G_v$ is a subgroup of $G$.
	\end{proof}
	
	\begin{lem}
		Let $0\leq i,k\leq d$. If $(i,k)$ is not a diamond pair of $\mathfrak{X}$, then 
		\begin{align*}
			E_i^*T_0 E_k^* = E_i^* \tilde{T} E_{k}^*.
		\end{align*}
		Moreover, if $|\Delta_i| = |\Delta_k| = 2$, then $H_{|\Delta_i\cup \Delta_k} \cong \mathbb{Z}_2$, where $H = G_v$.\label{lem:non-diamonds}
	\end{lem}
	\begin{proof}
		Assume that $(i,k)$ is not a diamond pair of $\mathfrak{X}$. To show that $E_i^*T_0 E_k^* = E_i^*\tilde{T} E_k^*$, it is enough to show that $A(\Omega,O) \in T_0$ for all orbitals $O \subseteq \Delta_i\times \Delta_k$ of $G_v$ on $\Omega$.
		\begin{claim}
			If $O \subseteq \Delta_i \times \Delta_k$ is an orbital of $G_v$ on $\Omega$ and $|\Delta_i| = |\Delta_k| = 1$, then $A(\Omega,O)  \in T_0$.\label{claim1}
		\end{claim}
		\begin{proof}[Proof of Claim~\ref{claim1}]
			Clearly, $O = \Delta_{i} \times \Delta_{k}$, and we may assume that $O= \Delta_i\times \Delta_k = \{(u,w)\}$. Note that $A(\Omega,O) = E_{u,w}$, since $|\Delta_i| = |\Delta_k| = 1$. If $0\leq j\leq d$ such that $(u,w)\in R_j$, then 
			\begin{align*}
				E_{i}^* A_j E_{k}^*  = E_{u,w} = A(\Omega,O) \in T_0 .
			\end{align*}
		\end{proof}
		\begin{claim}
			If $O \subseteq \Delta_i \times \Delta_k$ is an orbital of $G_v$ on $\Omega$ and $|\Delta_i| = 1$ and $ |\Delta_k| = 2$, then $A(\Omega,O)  \in T_0$.\label{claim2}
		\end{claim}
		\begin{proof}[Proof of Claim~\ref{claim2}]
			Assume that $\Delta_i = \{u\}$ and $\Delta_k = \{w,w^\prime\}$. Using the fact that $O\subseteq \Delta_i \times \Delta_k$, it is not hard to see that $O = \{(u,w),(u,w^\prime)\}$. Thus, $	A(\Omega,O) = E_{u,w}+E_{u,w^\prime}$. Let $0\leq j\leq d$ such that $(u,w) \in R_j$. Since $O$ is an orbital of $G_v$ on $\Omega$, by Lemma~\ref{lem:orbital-subgroups}, the orbital $R_j$ is a union of orbitals of $G_v$ on $\Omega$, thus including the orbital $O$. Hence, we deduce that $(u,w^\prime) \in R_j$, and consequently
			\begin{align*}
				A(\Omega,O) = E_{u,w}+E_{u,w^\prime} = E_{i}^* A_j E_{k}^* \in  T_0.
			\end{align*}
		\end{proof}
		\begin{claim}
			If $O \subseteq \Delta_i \times \Delta_k$ is an orbital of $G_v$ on $\Omega$ with $|\Delta_i| = 2$ and $ |\Delta_k| = 1$, then $A(\Omega,O)  \in  T_0 $.\label{claim3}
		\end{claim}
		\begin{proof}[Proof of Claim~\ref{claim3}]
			Assume that $\Delta_i = \{u,u^\prime\}$ and $\Delta_k = \{w\}$. Then, it is clear that the orbital $O$ is equal to $\{(u,w),(u^\prime,w)\}$. Hence, $A(\Omega,O) = E_{u,w}+E_{u^\prime,w}$. Let $0\leq j\leq d$ such that $(u,w)\in R_j$. Using Lemma~\ref{lem:orbital-subgroups}, we deduce as in the previous claim that $(u^\prime,w)\in R_j$.
			Then, we have 
			\begin{align*}
				A(\Omega,O) = E_{u,w}+E_{u^\prime,w} = E_{i}^* A_j E_{k}^* \in  T_0.
			\end{align*} 
		\end{proof}
		\begin{claim}
			If $O \subseteq \Delta_i \times \Delta_k$ is an orbital of $G_v$ on $\Omega$ and $|\Delta_i| =  |\Delta_k| = 2$, then $A(\Omega,O)  \in T_0$.\label{claim4}
		\end{claim}
		\begin{proof}[Proof of Claim~\ref{claim4}]
			Let $\Delta_i = \{u_1,u_2 \}$ and $\Delta_k = \{w_1,w_2\}$, and denote $ G_v$ by $H $. By Lemma~\ref{lem:stabilizer}, we know that $H_{|\Delta_i\cup \Delta_k}$ is isomorphic to $\mathbb{Z}_2$ or $\mathbb{Z}_2\times \mathbb{Z}_2$. We will distinguish these two cases.
			
			If $H_{|\Delta_i\cup \Delta_k} \cong \mathbb{Z}_2$, then there are exactly two orbitals of $H$ in $ \Delta_i \times \Delta_k$, which are
			\begin{align*}
				O_1 &= \{(u_1,w_1),(u_2,w_2)\}\mbox{ and } O_2 = \{ (u_1,w_2),(u_2,w_1) \}.
			\end{align*}
			In this case, we note that 
			$
				E_{i}^*\tilde{T} E_{k}^* = \operatorname{Span} \left\{ A(\Omega,O_1),A(\Omega,O_2) \right\}.
			$
			Since $(i,k)$ is not a diamond pair, there exist two distinct integers $0\leq j,j^\prime \leq d$ such that $(u_1,w_1),(u_2,w_2) \in R_j$ and $(u_2,w_1),(u_1,w_2)\in R_{j^\prime}$ (see Figure~\ref{fig:non-diamond}). Hence, we have 
			\begin{align*}
				A(\Omega,O_1) = E_i^* A_jE_k^* \in T_0 \mbox{ and } A(\Omega,O_2) = E_i^* A_{j^\prime}E_{k}^* \in T_0.
			\end{align*}
			Since $O\in \{O_1,O_2\}$, we have $A(\Omega,O)  \in T_0$. 
			
			If $H_{|\Delta_i\cup \Delta_k} \cong \mathbb{Z}_2\times \mathbb{Z}_2$, then $i\neq k$ and there is an involution in $H$ that fixes the two elements of $\Delta_i$ pointwise and swaps the elements of $\Delta_k$. Similarly, there exists another involution in $H$ which fixes the elements of $\Delta_k$ pointwise and swaps the elements of $\Delta_i$. Consequently, the set $\Delta_i \times \Delta_k$ is itself an orbit of $H$ in its action on $\Omega \times \Omega$. Therefore, 
			\begin{align*}
				O = \left\{ (u_1,w_1),(u_2,w_2),(u_1,w_2),(u_2,w_1) \right\}.
			\end{align*}
			By Lemma~\ref{lem:orbital-subgroups}, we note that there exists $0\leq j\leq d$ such that $(u_1,w_1),(u_2,w_2),(u_1,w_2),(u_2,w_1) \in R_j$. Hence, $p_{ij}^k = 2$, and $(i,k)$ must be a diamond pair, which is a contradiction. Therefore,  $H_{|\Delta_i\cup \Delta_k} $ cannot be isomorphic to $ \mathbb{Z}_2\times \mathbb{Z}_2$
		\end{proof}
		With Claims~\ref{claim1}-\ref{claim4}, we have shown that if $(i,k)$ is not a diamond pair, then $	E_i^*T_0 E_k^* = E_i^* \tilde{T} E_{k}^*.$ In the proof of Claim~\ref{claim4}, we also proved that if $|\Delta_i| = |\Delta_k| = 2$, then $H_{|\Delta_i\cup \Delta_k} \cong \mathbb{Z}_2$, where $H = G_v$. This completes the proof.
	\end{proof}
	
	\begin{lem}
		If $(i,k)$ is a $\mathbb{Z}_2\times \mathbb{Z}_2$-diamond pair of $\mathfrak{X}$, then $\dim E_i^* \tilde{T} E_k^* = 1$, and thus $$E_i^* \tilde{T} E_k^* = E_i^* {T}_0 E_k^*.$$ \label{lem:diamond-pairs-Klein}
	\end{lem}
	\begin{proof}
		As $(i,k)$ is a diamond pair, we have seen in Proposition~\ref{prop:diamond} that $\dim E_i^* {T}_0 E_k^* = 1$. So it remains to show that $\dim E_i^* \tilde{T} E_k^* = 1$. Let $H = G_v$ and assume that $\Delta_i = \{u_1,u_2\}$ and $\Delta_k = \{w_1,w_2\}$. Using the fact that $H_{|\Delta_i\cup \Delta_k} \cong \mathbb{Z}_2 \times \mathbb{Z}_2$, we know that there exists an element of $H_{|\Delta_i\cup \Delta_j}$ fixing $\Delta_i$ pointwise, and permuting the elements of $\Delta_k$. Similarly, there exists an element of $H_{|\Delta_i\cup \Delta_k}$ fixing $\Delta_k$ pointwise and permuting $\Delta_i$. Hence, the set $O = \Delta_i \times \Delta_k =\{ (u_1,w_1),(u_1,w_2),(u_2,w_1),(u_2,w_2) \}$ is an orbit of $H$ on $\Omega \times \Omega$. We deduce that 
		\begin{align*}
			E_{i}^*\tilde{T} E_{k}^* = \operatorname{Span} \left\{ A(\Omega,O) \right\}.
		\end{align*}
		Consequently, we have that $\dim E_{i}^*\tilde{T} E_{k}^* = 1$, and $E_{i}^*\tilde{T} E_{k}^* = E_{i}^*{T}_0 E_{k}^*.$
	\end{proof}
	\begin{lem}
		If $(i,k)$ is a $\mathbb{Z}_2$-diamond pair of $\mathfrak{X}$, then $\dim E_i^* \tilde{T} E_k^* = 2$, and $$E_i^* \tilde{T} E_k^* = \operatorname{Span}\{E_i^*AE_k^*,E_i^*BE_k^*\},$$ for some matrices $A,B\in \tilde{T}\setminus T_0$ such that $\overline{E_i^*AE_k^*} = I_2$ and $\overline{E_i^*BE_k^*} = J_2- I_2$.\label{lem:diamond-pairs}
	\end{lem}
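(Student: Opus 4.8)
The plan is to read off everything from the diamond configuration of Figure~\ref{fig:lozenge-triples}, together with the standing Schurian hypothesis of this subsection, under which $N_i = \Delta_i(v)$ and $N_k = \Delta_k(v)$ are $G_v$-orbits, both of size $2$ since $(i,k)$ is a diamond pair. First I would set $N_i = \{y_1,y_2\}$ and $N_k = \{u,z\}$ and note that, because $G_v = \langle\varphi\rangle$ has order $2$ and each of these four points lies in a $G_v$-orbit of size $2$, the involution $\varphi$ cannot fix any of them; hence $\varphi$ interchanges $y_1 \leftrightarrow y_2$ and $u \leftrightarrow z$.

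Next I would count the $G_v$-orbitals contained in $N_i \times N_k$. Since $N_i$ and $N_k$ are $G_v$-invariant, every $G_v$-orbital on $\Omega\times\Omega$ that meets $N_i\times N_k$ is entirely contained in it; and from the action of $\varphi$ just described there are exactly two, namely $O_1 = \{(y_1,u),(y_2,z)\}$ and $O_2 = \{(y_1,z),(y_2,u)\}$. As recorded just before Lemma~\ref{lem:orbital-subgroups} (and used in the proof of Claim~\ref{claim4}), $E_i^*\tilde{T}E_k^*$ is spanned by the matrices $A(\Omega,O)$ with $O$ a $G_v$-orbital inside $\Delta_i(v)\times\Delta_k(v)$, so this already yields $\dim E_i^*\tilde{T}E_k^* = 2$ with $E_i^*\tilde{T}E_k^* = \operatorname{Span}\{E_i^*AE_k^*, E_i^*BE_k^*\}$ for $A = A(\Omega,O_1)$ and $B = A(\Omega,O_2)$.

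It then remains to verify the two stated normalizations and non-membership in $T^0$. Ordering $N_i$ as $(y_1,y_2)$ and $N_k$ as $(u,z)$, the definition of $A(\Omega,O_\ell)$ gives immediately $\overline{E_i^*AE_k^*} = I_2$ and $\overline{E_i^*BE_k^*} = J_2 - I_2$. To see that $A,B \notin T^0$, invoke Proposition~\ref{prop:diamond}: the space $E_i^*T^0E_k^*$ is the line spanned by $E_i^*A_jE_k^*$ with $\overline{E_i^*A_jE_k^*} = J_2$. Since neither $I_2$ nor $J_2 - I_2$ is a scalar multiple of $J_2$, neither $E_i^*AE_k^*$ nor $E_i^*BE_k^*$ lies in $E_i^*T^0E_k^*$; and if $A$ (resp.\ $B$) were in $T^0$ then $E_i^*AE_k^*$ (resp.\ $E_i^*BE_k^*$) would lie in $E_i^*T^0E_k^*$, a contradiction.

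There is no genuine obstacle here; the statement is essentially a bookkeeping consequence of the $G_v$-action on $N_i \times N_k$. The one point deserving a moment of care is the assertion that $O_1$ and $O_2$ are full $G_v$-orbitals on $\Omega\times\Omega$ (so that $A,B \in \tilde{T}$ in the first place), which is exactly what the $G_v$-invariance of $N_i$ and $N_k$ supplies; everything else is a direct reading of the submatrices.
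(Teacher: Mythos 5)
Your proposal is correct and follows essentially the same route as the paper: identify the two $G_v$-orbitals inside $\Delta_i(v)\times\Delta_k(v)$, observe that their adjacency matrices span $E_i^*\tilde{T}E_k^*$ with submatrices $I_2$ and $J_2-I_2$, and then rule out membership in $T^0$ via Proposition~\ref{prop:diamond}, which forces $\overline{E_i^*A_jE_k^*}=J_2$ for the unique $j$ with $p_{ij}^k\neq 0$. The only difference is that you spell out why there are exactly two orbitals (the involution $\varphi$ swapping the points of each orbit), which the paper asserts without elaboration.
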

	\begin{proof}
		Let $\Delta_i = \{u_1,u_2 \}$ and $\Delta_k = \{w_1,w_2\}$. As $(i,k)$ is a $\mathbb{Z}_2$-diamond pair, the subgroup $H_{|\Delta_i\cup \Delta_k}$ is isomorphic to $\mathbb{Z}_2$. That is, every element of $H = G_v$ that swaps the elements of $\Delta_i$ also swaps the element of $\Delta_k$, and vice versa. Since $|\Delta_i| = |\Delta_k| = 2$, there are two orbitals of $H$ in $ \Delta_i \times \Delta_k$, which are
		\begin{align*}
			O_1 &= \{(u_1,w_1),(u_2,w_2)\}\mbox{ and } O_2 = \{ (u_1,w_2),(u_2,w_1) \}.
		\end{align*}
		Hence, 
		\begin{align*}
			E_{i}^*\tilde{T} E_{k}^* = \operatorname{Span} \left\{ A(\Omega,O_1),A(\Omega,O_2) \right\}.
		\end{align*}
		In particular, we have $\left\{\overline{E_i^*A(\Omega,O_1)E_k^*} ,\overline{E_i^*A(\Omega,O_2)E_k^*}\right\} = \{I_2,J_2-I_2\}$.
		As $(i,k)$ is a diamond pair we know that there exists a unique $0\leq j\leq d$ such that $(u_1,w_1),(u_1,w_2),(u_2,w_1),(u_2,w_2) \in R_j$, and
		\begin{align*}
			E_{i}^* T_0 E_{k}^* = \operatorname{Span} \left\{ E_{i}^*A_jE_{k}^* \right\}
		\end{align*}
		where
		$
			\overline{E_{i}^* A_j E_{k}^*}
			=
			J_2
		$
		. Therefore, $$A(\Omega,O_1),A(\Omega,O_2) \not \in T_0.$$
		This completes the proof.
	\end{proof}
	
%

	
	\section{Proof of Theorem~\ref{thm:main1}}\label{sect:main-result}
	
	In this section, we prove Theorem~\ref{thm:main1}. We first determine when such association schemes are triply regular, then we provide a necessary and sufficient condition for $T$ to be equal to $\tilde{T}$. The proof of Theorem~\ref{thm:main1} follows immediately from these two results.
	
	Throughout this section, we let $\mathfrak{X}$ be an association schemes as in Hypothesis~\ref{hyp2}.
	Recall that $\mathfrak{X}$ is triply regular if $T_0 = T$, or equivalently, $T_0$ is an algebra, so it is closed under multiplication. In the next lemma, we give a necessary and sufficient condition for $\mathfrak{X}$ to be triply regular.
	
	\begin{lem}
		Let $\mathfrak{X}$ be an association scheme as in Hypothesis~\ref{hyp2}. The following statements are equivalent.
		\begin{enumerate}[(i)]
			\item $\mathfrak{X}$ is triply regular.\label{i1}
			\item For any diamond pair $(i,k)$ of $\mathfrak{X}$, there exists no $0\leq \ell\leq d$ with $|\Delta_\ell| = 2$ such that $(i,\ell)$ and $(\ell, k)$ are not diamond pairs.\label{ii2}
		\end{enumerate}\label{lem:triply-regular}
	\end{lem}
	\begin{proof}
		\eqref{i1} $\Rightarrow$ \eqref{ii2} Suppose that $T_0 = T$, or equivalently, $T_0$ is closed under multiplication. Assume by contradiction that there exists a diamond pair $(i,k)$ of $\mathfrak{X}$ and an integer $0\leq \ell \leq d$ with $|\Delta_\ell| = 2$ such that $(i,\ell)$ and $(\ell,k)$ are not diamond pairs. By Proposition~\ref{prop:diamond}, there exists a unique $0\leq j\leq d$ such that $E_i^* T_0 E_k^* = \operatorname{Span} \{E_i^* A_j E_k^*\}$ and $\overline{E_i^*A_jE_k^*} = J_2$. By Proposition~\ref{prop:non-diamond}, we can find $0\leq t,t^\prime \leq d$ such that $\overline{E_i^* A_t E_\ell^*} = I_2$, and $\overline{E_\ell^* A_{t^\prime} E_k^*} = I_2$. Consequently, we have $\left(E_i^* A_t E_\ell^*\right)\left(E_\ell^* A_{t^\prime} E_k^*\right) \not \in E_i^* T_0 E_k^* = \operatorname{Span} \{E_i^* A_j E_k^*\}$. This is impossible since $T_0$ is closed under multiplication. We conclude that \eqref{ii2} holds.
		
		\eqref{i1} $\Leftarrow$ \eqref{ii2} Conversely, assume that for any diamond pair $(i,k)$ of $\mathfrak{X}$, there exists no $0\leq \ell\leq d$ with $|\Delta_\ell| = 2$ such that $(i,\ell)$ and $(\ell, k)$ are not diamond pairs. Assume by contradiction that $T_0$ is not closed under multiplication. Then, there exist three integers $0\leq j,j^\prime,\ell \leq d$ such that $\left(E_{i}^*A_jE_\ell^*\right) \left(E_{\ell}^*A_{j^\prime}E_{k}^*\right) \not \in T_0$.

		If $(i,k)$ is not a diamond pair, then by Lemma~\ref{lem:non-diamonds}, we know that $E_i^*T_0 E_k^* = E_i^* \tilde{T} E_{k}^*.$ Using the fact that $\left(E_{i}^*A_jE_\ell^*\right) \left(E_{\ell}^*A_{j^\prime}E_{k}^*\right) \in E_i^*\tilde{T}E_k^*$, we have $\left(E_{i}^*A_jE_\ell^*\right) \left(E_{\ell}^*A_{j^\prime}E_{k}^*\right) \in E_i^*\tilde{T}E_k^* =E_i^*T_0 E_k^* \subseteq T_0$, which is impossible. Therefore, $(i,k)$ is a diamond pair and $|\Delta_{i}| = |\Delta_{k}| = 2$. If $|\Delta_\ell| = 1$, then $(i,\ell)$ and $(\ell, k)$ are not diamond pairs, and we have 
		\begin{align*}
			\overline{(E_i^*A_jE_\ell^*)(E_\ell^* A_{j^\prime} E_k^*)}
			=
			\begin{bmatrix}
				1 \\
				1
			\end{bmatrix}
			\begin{bmatrix}
				1 &1
			\end{bmatrix}
			=
			\begin{bmatrix}
				1 & 1\\
				1 & 1
			\end{bmatrix},
		\end{align*}
		implying that $(E_i^*A_jE_\ell^*)(E_\ell^* A_{j^\prime} E_k^*) \in T_0$, a contradiction. Therefore, $|\Delta_\ell| = 2$.		
		By \eqref{ii2}, we know that for $0\leq \ell \leq d$ with $|\Delta_\ell| = 2$, one of $(i,\ell)$ or $(\ell,k)$ is a diamond pair. Without loss of generality, assume that $(i,\ell)$ is a diamond pair. By Proposition~\ref{prop:diamond}, there exists a unique integer $0\leq j\leq d$ such that $E_{i}^*A_jE_\ell^* \neq 0 $, and $\overline{E_{i}^*A_jE_\ell^*} = J_2$. As 
		\begin{align*}
			\overline{E_\ell^*A_{j^\prime}E_k^*} \in 
			\left\{
			J_2,
			I_2,
			J_2- I_2
			\right\},
		\end{align*}
		we have  $\overline{E_{i}^*A_jE_\ell^*}\overline{E_\ell^*A_{j^\prime}E_k^*}$ is a multiple of $ J_2$. We conclude that $\left(E_{i}^*A_jE_\ell^*\right) \left(E_{\ell}^*A_{j^\prime}E_{k}^*\right) \in E_i^* T_0E_k^* = \operatorname{Span} \{ E_i^* A_jE_k^* \} \subseteq T_0$, which is a contradiction. Hence, $T_0$ must be closed under multiplication, and so $T_0 = T$.
	\end{proof}
	
	\begin{lem}
		Let $\mathfrak{X}$ be an association scheme as in Hypothesis~\ref{hyp2}. The following statements are equivalent.
		\begin{enumerate}[(a)]
			\item $ T = \tilde{T}$.\label{a}
			\item For any  $\mathbb{Z}_2$-diamond pair $(i,k)$ of $\mathfrak{X}$, there exists an integer $0\leq \ell\leq d$ with $|\Delta_\ell| = 2$ such that $(i,\ell)$ and $(\ell,k)$ are not diamond pairs.\label{b}
		\end{enumerate}\label{lem:TandTtilde}
	\end{lem}
	\begin{proof}

		\eqref{a} $\Rightarrow$ \eqref{b}. Assume that $T = \tilde{T}$.
		If $(i,k)$ is a $\mathbb{Z}_2$-diamond pair, then we know that $\dim E_i^* T_0E_k^* = 1$ and by \eqref{a}, $\dim E_i^*TE_k^* = \dim E_i^* \tilde{T} E_k^* = 2$. 
		We deduce that there exist $0\leq \ell,j,j^\prime\leq d$ such that $(E_i^*A_jE_\ell^*)(E_\ell^* A_{j^\prime} E_k^*) \not\in T_0$. If $|\Delta_\ell| = 1$, then $(i,\ell)$ and $(\ell, k)$ are not diamond pairs, and we have 
		\begin{align*}
			\overline{(E_i^*A_jE_\ell^*)(E_\ell^* A_{j^\prime} E_k^*)}
			=
			\begin{bmatrix}
				1 \\
				1
			\end{bmatrix}
			\begin{bmatrix}
				1 &1
			\end{bmatrix}
			=
			\begin{bmatrix}
				1 & 1\\
				1 & 1
			\end{bmatrix},
		\end{align*}
		implying that $(E_i^*A_jE_\ell^*)(E_\ell^* A_{j^\prime} E_k^*) \in T_0$, a contradiction.
		Hence, $|\Delta_\ell| = 2$. If one of $(i,\ell)$ or $(\ell,k)$ is a diamond pair, then we either have $\overline{E_i^*A_jE_\ell^*} = J_2$ or $\overline{E_\ell^* A_{j^\prime} E_k^*} = J_2$. It is not hard to see that in either case, $(E_i^*A_jE_\ell^*)(E_\ell^* A_{j^\prime} E_k^*) \in E_i^*T_0E^*_k$, which is a contradiction again. Therefore, none of $(i,\ell)$ and $(\ell,k)$ are diamond pairs.
		
		\eqref{b} $\Rightarrow$ \eqref{a} It is enough to show that $E_i^* TE_k^* = E_i^* \tilde{T} E_k^*$ for all $0\leq i,k\leq d$. If $(i,k)$ is not a diamond pair, then by Proposition~\ref{prop:non-diamond} and Lemma~\ref{lem:non-diamonds}, we have $E_i^*T_0E_k^* = E_{i}^*\tilde{T} E_k^*$, and in particular, $E_i^*TE_k^* = E_{i}^*\tilde{T} E_k^*$.
		
		Assume that $(i,k)$ is a diamond pair. Then, $\dim E_i^*T_0E_k^* = 1$. If $(i,k)$ is a $\mathbb{Z}_2\times \mathbb{Z}_2$-diamond pair, then by Lemma~\ref{lem:diamond-pairs-Klein}, we know that $\dim E_i^* \tilde{T} E_k^* = 1$, so it is clear that $E_i^* T_0E_k^* = E_i^* TE_k^* = E_i^* \tilde{T} E_k^*$. If $(i,k)$ is a $\mathbb{Z}_2$-diamond pair, then by \eqref{b} there exists an $0\leq \ell\leq d$ such that $(i,\ell)$ and $(\ell, k)$ are not diamond pairs. So, there exist $0\leq j,j^\prime \leq d$ such that $(E_i^*A_jE_\ell)(E_\ell A_{j^\prime} E_k^*) \in T\setminus T_0$. In particular, $1 = \dim E_i^*T_0 E_k^* < \dim E_i^* T E_k^* \leq  \dim E_i^* \tilde{T} E_k^* = 2$, so we must have $E_i^* TE_k^* = E_i^*\tilde{T}E_k^*$, for every $0\leq i,k\leq d$ such that $(i,k)$ is a $\mathbb{Z}_2$-diamond pair. Consequently, $T = \tilde{T}$.
		
	\end{proof}
	
	Now, we are ready to prove Theorem~\ref{thm:main1}.
	\begin{proof}[{\bf Proof of Theorem~\ref{thm:main1}}]
		
		If $\mathfrak{X}$ is triply transitive, then $T_0 = T = \tilde{T}$. By Lemma~\ref{lem:triply-regular} and Lemma~\ref{lem:TandTtilde}, $\mathfrak{X}$ cannot admit $\mathbb{Z}_2$-diamond pairs. Conversely, if $\mathfrak{X}$ does not admit $\mathbb{Z}_2$-diamond pair, then Lemma~\ref{lem:TandTtilde}\eqref{b} vacuously hold, so we have $ T = \tilde{T}$. By assumption, $\mathfrak{X}$ admits only $\mathbb{Z}_2\times \mathbb{Z}_2$-diamond pairs, so, it is enough to show that Lemma~\ref{lem:triply-regular}\eqref{ii2} holds for $\mathbb{Z}_2\times \mathbb{Z}_2$-diamond pairs. By contradiction, assume that \eqref{ii2} does not hold. Then, there exists a $\mathbb{Z}_2\times \mathbb{Z}_2$-diamond pair $(i,k)$ of $\mathfrak{X}$ and an integer $0\leq \ell\leq d$ with $|\Delta_\ell| = 2$ such that $(i,\ell)$ and $(\ell, k)$ are not diamond pairs. This implies $E_i^* T_0 E_{k}^*$ is not closed under multiplication, which is a contradiction to Lemma~\ref{lem:diamond-pairs-Klein}, so \eqref{ii2} holds and $T_0 = T$.
	\end{proof}
	
	\section{Examples of triply-transitive quasi-thin association schemes}\label{sect:examples}
	Theorem~\ref{thm:main1} gives a characterization of the triply-transitive quasi-thin association schemes by counting the number of diamond pairs. We give two examples of quasi-thin association schemes admitting transitive automorphism groups that are not triply transitive. These two examples are in fact only such examples with order up to $30$.

	
	We will see examples arising from transitive permutation group actions of a group by right multiplication on the subgroup generated by an involution. 
	If $G$ is a group and $H = \langle b\rangle \leq G$ has order $2$, then the action of $G$ on right cosets of $H$ by right multiplication is faithful, and is therefore permutation equivalent to a transitive permutation group of degree $[G:H]$. By choice of $H$, the orbital scheme arising from this transitive group is quasi-thin, and if $\Delta_i$ and $\Delta_k$ are two orbits of $H$ of size $2$, then $H_{|\Delta_i \cup \Delta_{k}} = \mathbb{Z}_2$.


	\begin{exa}
		Consider the group $$G = \alt{5} = \left\langle a,b| a^5=b^2 = (ab)^3 =  1 \right\rangle$$ and its subgroup $H = \langle b\rangle$. There is a unique conjugacy class of subgroups of order $2$ in $G$, so all actions of $G$ on cosets of a subgroup of order $2$ are permutation equivalent to $G$ acting on cosets of $H$. This group is permutation equivalent to \verb*|TransitiveGroup(30,9)| in the library of transitive groups of \verb*|Sagemath| \cite{sagemath}.
		
		If $g\in G$ has a fixed point in this action, then $g$ is conjugate in $H$. Therefore, the only permutations in $G$ fixing a coset of $H$ must be the identity or an involution. Moreover, the number of fixed cosets is either equal to $[G:H] = 30$ or the number of orbits of size $1$ of $H$(equivalently, the number of cosets fixed by $b$). If $(Hx)b = Hx$ for some $x\in G$, then we have $xbx^{-1} \in H$. In other words, if $Hx$ is fixed by $b$, then $x\in \operatorname{N}_{G}(\langle b\rangle)$. It is not hard to see that
		\begin{align*}
			\operatorname{N}_G(\langle b\rangle) = \operatorname{C}_{G}(\langle b\rangle) =  \langle b \rangle \times \mathbb{Z}_2.
		\end{align*} 
		Therefore, $b$ fixes two cosets of $H$. A direct consequence of this fact is that the rank of $G$ acting by right multiplication on right cosets of $H$ is $1+1+\tfrac{28}{2} = 16$, and
		\begin{align*}
			\dim \tilde{T} = 2\times 16+ 14\times 30 = 452.
		\end{align*}
		
		As the rank of $G$ acting on cosets of $H$ is $1+1+14=16$, we may assume that the orbits of $H$ are $\Delta_1= \{H\}$, $\Delta_2 = \{ Hx_2 \}$, $\Delta_3 = \{Hx_{3},Hy_{3}\}$, $\ldots$, and $\Delta_{16} = \{Hx_{16},Hy_{16}\}$. 
		
		Since the scheme is quasi-thin, we know that $p_{ij}^k\leq 2$, for all $0\leq i,j,k\leq d$. There exists a unique pair $(i,k)$ where $0\leq i,k\leq d$ and $|\Delta_{i}| = |\Delta_{k}| = 2$ such that $p_{ij}^k = p_{kj}^i= 2$. Hence, these are $\mathbb{Z}_2$-diamond pairs.
		The association scheme can be represented by the matrix given below. The entries in {\color{blue} blue} and {\color{red} red} correspond to the pairs of orbits forming $\mathbb{Z}_2$-diamond pairs.
		{\footnotesize
			\begin{alltt}
				[ 0  1  2  3  4  5  5  4  6  7  3  2  8  9 10 11 12 13  8  9 14 14 15 15 11 10 13 12  6  7]
				[ 1  0 13 12  9  8  8  9 10 11 12 13  5  4  6  7  3  2  5  4 14 14 15 15  7  6  2  3 10 11]
				[ 2  3  0  1  4  5  2  3 14 14  4  5 11 10  9  8  6  7 15 15  7  6  8  9 10 11 12 13 12 13]
				[13 12  1  0  9  8 13 12 14 14  9  8  7  6  4  5 10 11 15 15 11 10  5  4  6  7  3  2  3  2]
				[ 8  9  8  9  0  1 13 12  2  3 12 13  4  5  5  4 11 10  6  7  3  2  6  7 15 15 14 14 11 10]
				[ 5  4  5  4  1  0  2  3 13 12  3  2  9  8  8  9  7  6 10 11 12 13 10 11 15 15 14 14  7  6]
				[ 5  4  2  3  3  2  0  1  7  6  4  5 10 11 15 15 14 14  9  8 13 12 11 10  9  8  6  7 13 12]
				[ 8  9 13 12 12 13  1  0 11 10  9  8  6  7 15 15 14 14  4  5  2  3  7  6  4  5 10 11  2  3]
				[11 10 15 15  2  3  7  6  0  1 13 12  4  5  2  3  8  9 14 14  4  5 12 13  8  9  6  7 10 11]
				[ 7  6 15 15 13 12 11 10  1  0  2  3  9  8 13 12  5  4 14 14  9  8  3  2  5  4 10 11  6  7]
				[13 12  8  9 12 13  8  9  3  2  0  1 15 15  7  6  2  3  7  6 10 11  4  5  5  4 11 10 14 14]
				[ 2  3  5  4  3  2  5  4 12 13  1  0 15 15 11 10 13 12 11 10  6  7  9  8  8  9  7  6 14 14]
				[ 4  5  6  7  8  9 10 11  8  9 14 14  0  1 13 12  4  5  2  3 12 13 11 10  6  7  2  3 15 15]
				[ 9  8 10 11  5  4  6  7  5  4 14 14  1  0  2  3  9  8 13 12  3  2  7  6 10 11 13 12 15 15]
				[10 11  9  8  5  4 14 14  2  3  7  6  3  2  0  1 15 15  7  6  4  5 13 12 11 10 12 13  9  8]
				[ 6  7  4  5  8  9 14 14 13 12 11 10 12 13  1  0 15 15 11 10  9  8  2  3  7  6  3  2  4  5]
				[12 13 11 10  6  7 15 15  4  5  2  3  8  9 14 14  0  1 13 12 11 10  4  5  2  3  8  9  7  6]
				[ 3  2  7  6 10 11 15 15  9  8 13 12  5  4 14 14  1  0  2  3  7  6  9  8 13 12  5  4 11 10]
				[ 4  5 14 14 11 10  9  8 15 15  7  6  2  3  7  6  3  2  0  1 13 12 10 11 12 13  5  4  8  9]
				[ 9  8 14 14  7  6  4  5 15 15 11 10 13 12 11 10 12 13  1  0  2  3  6  7  3  2  8  9  5  4]
				[15 15  7  6 13 12  3  2  8  9 10 11 12 13  8  9  6  7  3  2  0  1 \textcolor{red}{14} \textcolor{red}{14}  4  5 11 10  5  4]
				[15 15 11 10  2  3 12 13  5  4  6  7  3  2  5  4 10 11 12 13  1  0 \textcolor{red}{14} \textcolor{red}{14}  9  8  7  6  8  9]
				[14 14  4  5 11 10  6  7 12 13  8  9  6  7  3  2  8  9 10 11 \textcolor{blue}{15} \textcolor{blue}{15}  0  1 13 12  4  5  3  2]
				[14 14  9  8  7  6 10 11  3  2  5  4 10 11 12 13  5  4  6  7 \textcolor{blue}{15} \textcolor{blue}{15}  1  0  2  3  9  8 12 13]
				[ 6  7 10 11 14 14  9  8  4  5  5  4 11 10  6  7  2  3 12 13  8  9  3  2  0  1 15 15 13 12]
				[10 11  6  7 14 14  4  5  9  8  8  9  7  6 10 11 13 12  3  2  5  4 12 13  1  0 15 15  2  3]
				[ 3  2 12 13 15 15 11 10 11 10  6  7  2  3 12 13  4  5  5  4  6  7  8  9 14 14  0  1  9  8]
				[12 13  3  2 15 15  7  6  7  6 10 11 13 12  3  2  9  8  8  9 10 11  5  4 14 14  1  0  4  5]
				[11 10 12 13  6  7  3  2 10 11 15 15 14 14  9  8  7  6  4  5  5  4 13 12  3  2  9  8  0  1]
				[ 7  6  3  2 10 11 12 13  6  7 15 15 14 14  4  5 11 10  9  8  8  9  2  3 12 13  4  5  1  0]
			\end{alltt}
		}
		
		The subspace $T_0$ coincides with $\tilde{T}$ everywhere except on the orbits corresponding to the blocks in blue and red. In  each of these blocks, the difference in dimension  between the block of $T_0$ and $\tilde{T}$ is $1$. We deduce that $\dim T_0 = 450$, so $\mathfrak{X}$ is not triply transitive. Using Lemma~\ref{lem:TandTtilde}, we also deduce that $T = \tilde{T}$.
	\end{exa}
	\begin{exa}
		Let $G = \agl{1}{8}$. Then, $G$ acts doubly transitively on the field $\mathbb{F}_8$ as affine transformations. Therefore, $G$ also acts transitively on the $2$-subsets of $\mathbb{F}_8$. This transitive permutation group is \verb*|TransitiveGroup(28,11)| in the library of transitive groups in \verb*|Sagemath|. The stabilizer of a $2$-subsets of $\mathbb{F}_8$ is therefore a cyclic group of order $2$, and the orbital scheme is quasi-thin. The resulting orbital scheme is given below, where the entries highlighted in colours correspond to $\mathbb{Z}_2$-diamond pairs.
		
		\newpage
		{\footnotesize
		\begin{alltt}
			[ 0  1  2  2  3  4  5  5  6  7  8  9 10 11 12 13 14 14  3  4 15 15  7  6  9  8 10 11]
			[ 1  0 14 14  4  3 15 15  6  7  9  8 10 11 13 12  2  2  4  3  5  5  7  6  8  9 10 11]
			[10 11  0  1  2  2  4  3 15 15  7  6  8  9 10 11 12 13 14 14  4  3  5  5  6  7  9  8]
			[10 11  1  0 14 14  3  4  5  5  7  6  9  8 10 11 13 12  2  2  3  4 15 15  6  7  8  9]
			[ 8  9 10 11  0  1  \textcolor{red}{2}  \textcolor{red}{2}  3  4  5  5  7  6  9  8 10 11 12 13 14 14  3  4 15 15  6  7]
			[ 9  8 10 11  1  0 14 14  4  3 15 15  7  6  8  9 10 11 13 12  \textcolor{red}{2}  \textcolor{red}{2}  4  3  5  5  6  7]
			[ 7  6  9  8 10 11  0  1  2  2  4  3 15 15  6  7  8  9 10 11 12 13 14 14  4  3  5  5]
			[ 7  6  8  9 10 11  1  0 14 14  3  4  5  5  6  7  9  8 10 11 13 12  2  2  3  4 15 15]
			[15 15  6  7  8  9 10 11  0  1  2  2  \textcolor{green}{4}  \textcolor{blue}{3}  5  5  7  6  9  8 10 11 12 13 14 14  \textcolor{green}{4}  \textcolor{blue}{3}]
			[ 5  5  6  7  9  8 10 11  1  0 14 14  \textcolor{blue}{3}  4 15 15  7  6  8  9 10 11 13 12  2  2  \textcolor{blue}{3}  4]
			[ 3  4  \textcolor{orange}{5}  \textcolor{orange}{5}  7  6  9  8 10 11  0  1  2  2  3  4 15 15  6  7  8  9 10 11 12 13 14 14]
			[ 4  3 15 15  7  6  8  9 10 11  1  0 14 14  4  3  5  5  6  7  9  8 10 11 13 12  2  2]
			[ 2  2  3  4  5  5  6  7  9  8 10 11  0  1 14 14  3  4 15 15  7  6  8  9 10 11 12 13]
			[14 14  4  3 15 15  6  7  8  9 10 11  1  0  2  2  4  3  5  5  7  6  9  8 10 11 13 12]
			[12 13  2  2  4  3 15 15  7  6  8  9 11 10  0  1 14 14  4  3  5  5  6  7  9  8 11 10]
			[13 12 14 14  3  4  5  5  7  6  9  8 11 10  1  0  2  2  3  4 15 15  6  7  8  9 11 10]
			[11 10 12 13  2  2  3  4  5  5  6  7  8  9 11 10  0  1 14 14  3  4 15 15  7  6  9  8]
			[11 10 13 12 14 14  4  3 15 15  6  7  9  8 11 10  1  0  2  2  4  3  5  5  7  6  8  9]
			[ 8  9 11 10 12 13  \textcolor{red}{2}  \textcolor{red}{2}  4  3 15 15  6  7  9  8 11 10  0  1 14 14  4  3  5  5  7  6]
			[ 9  8 11 10 13 12 14 14  3  4  5  5  6  7  8  9 11 10  1  0  \textcolor{red}{2}  \textcolor{red}{2}  3  4 15 15  7  6]
			[ 6  7  9  8 11 10 12 13  2  2  3  4  5  5  7  6  8  9 11 10  0  1 14 14  3  4 15 15]
			[ 6  7  8  9 11 10 13 12 14 14  4  3 15 15  7  6  9  8 11 10  1  0  2  2  4  3  5  5]
			[ 5  5  7  6  8  9 11 10 12 13  2  2  \textcolor{blue}{3}  4 15 15  6  7  9  8 11 10  0  1 14 14  \textcolor{blue}{3}  4]
			[15 15  7  6  9  8 11 10 13 12 14 14  \textcolor{green}{4}  \textcolor{blue}{3}  5  5  6  7  8  9 11 10  1  0  2  2  \textcolor{green}{4}  \textcolor{blue}{3}]
			[ 4  3 15 15  6  7  9  8 11 10 12 13  2  2  4  3  5  5  7  6  8  9 11 10  0  1 14 14]
			[ 3  4  \textcolor{orange}{5}  \textcolor{orange}{5}  6  7  8  9 11 10 13 12 14 14  3  4 15 15  7  6  9  8 11 10  1  0  2  2]
			[ 2  2  4  3 15 15  7  6  9  8 11 10 12 13 14 14  4  3  5  5  6  7  8  9 11 10  0  1]
			[14 14  3  4  5  5  7  6  8  9 11 10 13 12  2  2  3  4 15 15  6  7  9  8 11 10  1  0]
		\end{alltt}
		}
		
		There are exactly 376 non-zero intersection numbers, so $\dim T_0 = 376$. In total, there are 24 $\mathbb{Z}_2$-diamond pairs in the scheme, so $\dim \tilde{T} = 376+24 = 400$. Moreover, the scheme also satisfies the conditions of Lemma~\ref{lem:TandTtilde}, so we  have $T = \tilde{T}$.
	\end{exa}
	\section{Proof of Theorem~\ref{thm:main}}\label{sect:thm1.2}
	
	For convenience, we consider the following hypothesis.
	
	\begin{hyp}
		Let $\mathfrak{X}$ be an association scheme as in Hypothesis~\ref{hyp2}.
		We assume additionally that $G$ has the following properties.
		\begin{enumerate}[(a)]
			\item The group $G$ admits a subgroup $R$ acting regularly.\label{hyp-Cayley}
			\item The stabilizer $G_v $ is a subgroup generated by an involution $ \varphi$.
		\end{enumerate}\label{hyp}
	\end{hyp}
	
	Theorem~\ref{thm:main} is restated as follows for convenience.
	\begin{thm}
		If $\mathfrak{X}$ is an association scheme as in Hypothesis~\ref{hyp}, then it is triply transitive.
		
	\end{thm}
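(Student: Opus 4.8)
By Theorem~\ref{thm:characterization}, it suffices to show that an association scheme $\mathfrak{X}$ as in Hypothesis~\ref{hyp} admits no diamond pairs. The plan is to argue by contradiction: suppose $(i,k)$ is a diamond pair, so $|\Delta_i(v)| = |\Delta_k(v)| = 2$ and there is a unique $j$ with $p_{ij}^k = 2$, realizing the configuration in Figure~\ref{fig:lozenge-triples}. The key structural input is Lemma~\ref{lem:regular}: the regular subgroup $R$ satisfies $G = R\rtimes G_v$ with $G_v = \langle \varphi\rangle$, and by \eqref{eq:hom} the involution $\varphi$ acts on $R$ as an automorphism of order dividing $2$. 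Identifying $\Omega$ with $R$ (via $v\leftrightarrow 1_R$), the suborbit $\Delta_i(v)$ becomes the $\langle\varphi\rangle$-orbit of some $r_i\in R$, so $\Delta_i(v) = \{r_i, r_i^\varphi\}$ with $r_i \neq r_i^\varphi$, and similarly $\Delta_k(v) = \{r_k, r_k^\varphi\}$.

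The main step is to translate the diamond condition $p_{ij}^k = 2$ into an algebraic identity in $R$ and derive a contradiction. Writing $\Delta_j(v) = \{r_j, r_j^\varphi\}$ (necessarily of size $2$), the relation $O_j$ consists of pairs $(x, y)$ with $yx^{-1} \in \Delta_j(v)$ or its $\varphi$-translate; more precisely, each orbital of $G$ on $R\times R$ has the form $\{(x, gx), (x^\varphi, g^\varphi x^\varphi) : x\in R\}$ for a fixed $g$. The diamond configuration says that for the arc $v \to u$ of type $k$ (so $u\in\{r_k, r_k^\varphi\}$), both vertices $y_1, y_2$ of $\Delta_i(v)$ are joined to $u$ by an arc of type $j$; writing $y_1 = r_i$, $y_2 = r_i^\varphi$, $u = r_k$, this forces both $r_k r_i^{-1}$ and $r_k (r_i^\varphi)^{-1}$ to lie in $\Delta_j(v) = \{r_j, r_j^\varphi\}$. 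Combining with the $\varphi$-equivariance — applying $\varphi$ to the first membership and using \eqref{eq:hom} — one gets relations tying $r_i, r_i^\varphi, r_k, r_k^\varphi, r_j, r_j^\varphi$ together. I would carefully enumerate the (few) cases for which of $r_j, r_j^\varphi$ each product equals; each case should collapse to an equation of the form $r_i^\varphi = r_i$ or $r_k^\varphi = r_k$ (contradicting that $\Delta_i, \Delta_k$ have size $2$), or to the conclusion that the two arcs $v\to u$ and $v\to u'$ of type $k$ coincide.

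The cleanest way to organize this is probably to observe that a diamond pair forces a \emph{fusion}: the configuration in Figure~\ref{fig:lozenge-triples} means the pairs $(y_1,u), (y_1,z), (y_2,u), (y_2,z)$ all lie in the single relation $O_j$, i.e.\ $O_j\cap(\Delta_i(v)\times\Delta_k(v))$ has size $4$, whereas the $G_v$-orbitals inside $\Delta_i(v)\times\Delta_k(v)$ have size $2$ each (since $G_v\cap G_{r_i}=1$ as $R$ is regular, by the argument in the proof of Claim~\ref{claim4}). Since $\mathfrak{X}$ is Schurian, $O_j$ is a single $G$-orbital, hence a union of $G_v$-orbitals of size $2$; so $O_j$ containing all four pairs is consistent. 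The actual contradiction must therefore come from the \emph{regularity} of $R$: the orbital $O_j = (v, u)^G$ has the property that for each $x\in\Omega$ there is exactly one vertex $w$ with $(x,w)\in O_j$ \emph{in each $R$-coset structure}, so the out-valency considerations combined with $\varphi$ having order $2$ pin down $r_j$ uniquely from $r_i, r_k$, and then the second arc of type $i$ (namely $v\to y_2$) cannot also reach $u$ via type $j$ unless $r_i^\varphi = r_i$. I expect the bookkeeping of which element equals which under $\varphi$ — and in particular ruling out the "degenerate" subcases where some of $r_i, r_j, r_k$ coincide or are inverses — to be the main obstacle; the group-theoretic skeleton is short, but the case analysis needs care to be exhaustive.
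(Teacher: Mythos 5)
Your proposal follows essentially the same route as the paper: reduce via Theorem~\ref{thm:characterization} to showing there are no diamond pairs, identify $\Omega$ with the regular subgroup $R$, and use that $\varphi$ acts on $R$ as an automorphism (equation \eqref{eq:hom}) to contradict the diamond configuration. The case analysis you defer is in fact a one-line computation, not an obstacle: writing $\Delta_i=\{u_1,u_2\}$ and $\Delta_k=\{w_1,w_2\}$, the configuration forces $\Delta_j=\{w_1u_1^{-1},\,w_1u_2^{-1}\}$, and since $\Delta_j$ is a $\varphi$-orbit of size two, $\varphi$ swaps its elements, so $w_1u_2^{-1}=(w_1u_1^{-1})^\varphi=w_1^\varphi(u_1^\varphi)^{-1}=w_2u_2^{-1}$, giving $w_1=w_2$, a contradiction.
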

	
	Let $\mathfrak{X} $ be an association scheme as defined in Hypothesis~\ref{hyp}. We will first derive a few important results form Hypothesis~\ref{hyp}. For any $1\leq i\leq d$, we let $\Gamma_i$ be the digraph $(\Omega,R_i)$.	
	By Hypothesis~\ref{hyp}\eqref{hyp-Cayley}, we know that $R\leq \Aut{\Gamma_i}$ acts regularly for all $1\leq i\leq d$. Hence, $\Gamma_i$ is a Cayley digraph for all $1\leq i\leq d$, and we may identify $\Omega$ and $R$. In particular, there is a one-to-one correspondence identifying $v$ to the identity element $1\in R$, which is a graph isomorphism.

	We will assume from hereon that $\Delta_0 = \{1\}$ and $\Delta_i \subset R$ for all $1\leq i\leq d$.
	Consequently, we may assume without loss of generality that for $1\leq i\leq d$
	\begin{align*}
		\Gamma_i = \operatorname{Cay}(R,\Delta_i).
	\end{align*}
	Since $\mathfrak{X}$ is Schurian, $\Gamma_i$ is an orbital digraph of $G$ for any $1\leq i\leq d$. The orbits of the stabilizer $G_1$ of $1\in R$ in $G$ are therefore the sets $\Delta_0 = \{1\},\Delta_1,\ldots,\Delta_d$. Assume that $G_1 = \langle \varphi\rangle$, where $\varphi$ is the involution in Hypothesis~\ref{hyp}. 
	
	We claim that the group $G $ is a split extension of $ R$. Indeed, since $R\leq G$ is a regular subgroup, we have $|R| = |\Omega| = \tfrac{|G|}{|G_1|} = \tfrac{|G|}{2}$ and  $[G:R] = 2$.
	As $R\trianglelefteq G$ is a maximal subgroup, and $\varphi \not\in R$, it is clear that $G = R\langle \varphi \rangle = RG_1	$. 
	Consequently, $G$ is a split extension of $R$, and $G = R\rtimes_\Psi G_1$ for some homomorphism $\Psi: G_1 \to \Aut{R}$. 
	
	\begin{proof}[{\bf Proof of Theorem~\ref{thm:main}}]
		By Theorem~\ref{thm:main1}, it is enough to show that $\mathfrak{X}$ does not admit $\mathbb{Z}_2$-diamond pairs. If $(i,k)$ is a $\mathbb{Z}_2$-diamond pair of $\mathfrak{X}$, and $\Delta_i = \{u_{1},u_{2}\}$ and $\Delta_k = \{w_1,w_2\}$, then we have $(u_1,w_1),(u_2,w_1) \in R_j$ as illustrated below. 
		
		\begin{figure}[H]
			\hspace*{1cm}
			\begin{tikzpicture}[scale=0.4]
				\SetGraphUnit{5}
				\GraphInit[vstyle=Normal]
				\Vertex[x=0,y=0,Math]{1}
				\Vertex[x=10,y=0,Math]{w_1}
				\Vertex[x=0,y=6,Math]{u_1}
				\Vertex[x=0,y=-6,Math]{u_2}
				\Vertex[x=-10,y=0,Math]{w_2}
				\SetUpEdge[style={->,ultra thick},color=black]
				\Edge[label=$k$](1)(w_1)
				\Edge[label=$k$](1)(w_2)
				\SetUpEdge[style={-> ,ultra thick},color=red]
				\foreach \v in {u_1,u_2}{%
					\Edge[label=$i$](1)(\v)}
				\SetUpEdge[style={-> ,ultra thick},color=blue]
				\foreach \v in {u_1,u_2}{%
					\Edge[label=$j$](\v)(w_1)
					\Edge[label=$j$](\v)(w_2)}
			\end{tikzpicture}
			\caption{A diamond pair in $\mathfrak{X}$.}
		\end{figure}
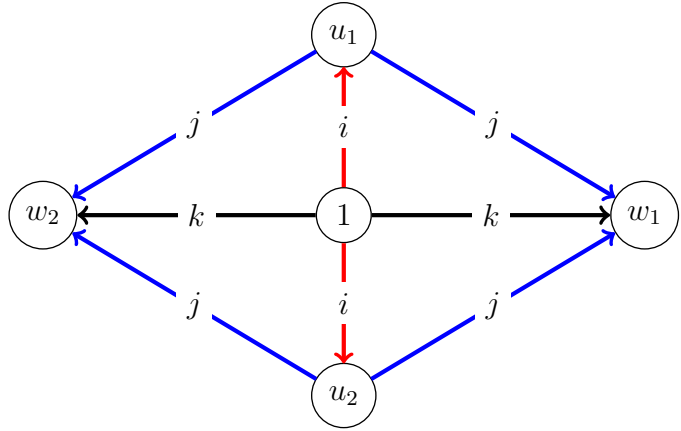
		By definition of the adjacency in the digraphs $(\Gamma_i)$, we have $\Delta_j = \{w_1u_1^{-1},w_1u_2^{-1}\}$. On the one hand, since $\Delta_j$ is an orbit of $G_1 = \langle \varphi\rangle$, we have $\left(w_1u_1^{-1}\right)^\varphi = w_1u_2^{-1}$. On the other hand, using the fact that $G_1$ acts as an automorphism of $R$, we have
		\begin{align*}
			w_1u_2^{-1} = \left(w_1u_1^{-1}\right)^\varphi = w_1^\varphi \left(u_1^{-1}\right)^\varphi = w_2(u_1^\varphi)^{-1} = w_2u_2^{-1}.
		\end{align*}
		Consequently, we have $w_1 = w_2$ which is impossible since $|\Delta_{k}| = 2$. Hence, $\mathfrak{X}$ does not admit any diamond pair, and is therefore triply transitive by Theorem~\ref{thm:main1}. This completes the proof.
	\end{proof}
	
	As an application of Theorem~\ref{thm:main}, we provide the next two examples.
	
	\begin{exa}
		Consider the group $G = \operatorname{D}_{2n} = \langle a,b|a^n=b^2=baba = 1\rangle$, $n\geq 3$, acting on cosets of the subgroup $\langle b\rangle$ by right multiplication. This group admits a regular subgroup, which is the cyclic group of order $n$, and the point stabilizer is a cyclic group of order $2$. By Theorem~\ref{thm:main}, the corresponding association scheme\footnote{this is in fact the association scheme of the distance-regular graph corresponding to the cycle of length $n$.} is triply transitive.\label{exa1}
	\end{exa}
	
	\begin{exa}
		Consider the group $G = \sym{n} = \alt{n} \rtimes \langle (1\ 2)\rangle$, $n\geq 3$, acting on cosets of the subgroup $\langle (1\ 2)\rangle$ by right multiplication. No conjugates of $\langle (1\ 2)\rangle$ can be contained in $\alt{n}$, so the subgroup $\alt{n}$ acts regularly. Hence, again by Theorem~\ref{thm:main}, the association scheme corresponding to this action is triply transitive.\label{exa2}
	\end{exa}
	
	\subsection*{Acknowledgement} We would like to thank the anonymous reviewers for their insightful feedback.
	

\end{document}